\let\cite=\citep
\newtheorem{theorem}{Theorem}
\newtheorem{lemma}{Lemma}
\newtheorem{corollary}{Corollary}
\newtheorem{example}{Example}
\newtheorem{remark}{Remark}
\begin{document}

\title{Positive Discrete Spectrum of the Evolutionary Operator of Supercritical
Branching Walks with Heavy Tails}
\author{E. Yarovaya}

%
%
\date{}
\maketitle

\abstract{We consider a continuous-time symmetric supercritical branching
random walk on a multidimensional lattice with a finite set of the particle
generation centres, i.e. branching sources. The main object of study is the
evolutionary operator for the mean number of particles both at an arbitrary
point and on the entire lattice. The existence of positive eigenvalues in the
spectrum of an evolutionary operator results in an exponential growth of the
number of particles in branching random walks, called supercritical in the
such case. For supercritical branching random walks, it is shown that the
amount of positive eigenvalues of the evolutionary operator, counting their
multiplicity, does not exceed the amount of branching sources on the lattice,
while the maximal of these eigenvalues is always simple. We demonstrate that
the appearance of multiple lower eigenvalues in the spectrum of the
evolutionary operator can be caused by a kind of `symmetry' in the spatial
configuration of branching sources. The presented results are based on
Green's function representation of transition probabilities of an underlying
random walk and cover not only the case of the finite variance of jumps but
also a less studied case of infinite variance of jumps.}

\medskip\noindent\textbf{Key words:} symmetric branching random walks;
heavy tails; evolutionary operator; discrete spectrum; Green function

\section{Introduction}
\label{sec:intro}

Branching random walks are found to be useful in the investigation of a wide
variety of applications in biology, the theory of homopolymers, population
dynamics, see, e.g. \cite{KimAxel02,CKMV09:e,Clauset11,BMW:MPRF14} and the
bibliography therein.

We consider a symmetric branching random walk (BRW) with continuous time on a
lattice $\mathbb{Z}^{d}$, $d\ge 1$, assuming that, at the initial moment of
time, there is a single particle in the system, which is located at some
point $x$, and birth and death of particles occur at $N$ lattice points
$x_{1}, x_{2},\ldots, x_{N}$, called branching sources.

Continuous-time branching random walks (BRWs) with one particle generation
centre on $\mathbb{Z}^{d}$ and finite variance of jumps of an underlying
random walk have been widely discussed, see, e.g.
\cite{ABY98-1:e,AB00:e,TVY03:e,YarBRW:e,YTV10:e,Y11-SBRW:e} and the
bibliography therein. The presence of a positive eigenvalue in the spectrum
of the corresponding evolution operator ensures an exponential growth of the
number of particles at each lattice point and on the entire lattice. BRWs
with an exponential growth of the number of particles are called
supercritical. By this reason, the authors of the mentioned above
publications usually restricted themselves to determining only the highest
eigenvalue. At the same time, in a number of situations, the information on
whether a positive eigenvalue is unique or nonunique and, in the latter case,
on the location of the other eigenvalues of the evolution operator may be
important for analyzing the behavior of the corresponding BRW.

For example, the uniqueness of a positive eigenvalue substantially
facilitates the study of the propagation of particle fronts
\cite{MolYar12:e,MolYar12-2:e}. However, in the presence of more than one
sources on $\mathbb{Z}^{d}$, the behavior of solutions of differential
equations for the moments of numbers of BRW particles is determined not only
by the value of the leading positive eigenvalue but also by the mutual
arrangement of the positive eigenvalues of the evolution operator
\cite{Y12_MZ:e}. In this connection, we are interested in obtaining
conditions for the emergence of a simple isolated positive eigenvalue in the
spectrum of the evolution operator with increasing the intensity of the
branching sources. We also study the process of the appearance of the
positive eigenvalues with further increasing the intensity of sources. We
show that the appearance of eigenvalues and their multiplicity are determined
not only by the intensities of sources but also by their spatial
configuration. This study makes it possible to reveal the difference in the
behavior of processes on a lattice and on continuous (see, e.g.,
\cite{CKMV09:e}) structures. Such a kind of results were obtained for the
case of finite variance of jumps in \cite{AY15-MPMM:e}. Here we present an
approach allowing to investigate the case of infinite variance of jumps.

The behaviour of the mean number of particles can be described in terms of
the evolutionary operator of a special type \cite{Y12_MZ:e}, which is a
perturbation of the generator $\mathscr{A}$ of a symmetric random walk. In
the case of equal intensities of sources this operator has the form
\begin{equation}\label{E-defY}
\mathscr{H}_{\beta}=\mathscr{A}+
\beta\sum_{i=1}^{N}\delta_{x_{i}}\delta_{x_{i}}^{T},\quad x_{i}\in \mathbb{Z}^{d},
\end{equation}
where $\mathscr{A}:l^{p}(\mathbb{Z}^{d})\to l^{p}(\mathbb{Z}^{d})$,
$p\in[1,\infty]$, is a symmetric operator and $\delta_{x}=\delta_{x}(\cdot)$
denotes a column vector on the lattice taking the value one at the point $x$
and zero otherwise. General analysis of this operator was first done in
\cite{Y12_MZ:e}.

In \cite{Y13-PS:e} it is shown how the operators of type \eqref{E-defY}
appear in BRW models and is demonstrated that the structure of its spectrum
determines the asymptotic behaviour of the numbers of  particles. For the
analysis of the evolutionary equations for the mean number of particles, in
\cite{Y13-PS:e} there was used the technique of differential equations in
Banach spaces. In \cite{Y12_MZ:e} it is shown that $\mathscr{H}_{\beta}$ is a
linear bounded operator in every space $l^{p}(\mathbb{Z}^{d})$,
$p\in[1,\infty]$. All points of its spectrum outside the circle
$C=\{z\in\mathbb{C}: |z-a(0)|\le|a(0)|\}$ with
$a(0)=\delta_{0}^{T}\mathscr{A}\delta_{0}$ may be only eigenvalues of finite
multiplicity. This statement allowed to propose a general method for
obtaining a finite set of equations defining conditions of the existence of
isolated positive eigenvalues in the spectrum of the operator
$\mathscr{H}_{\beta}$ lying outside $C$.

In \cite{Y12_MZ:e} it is shown, that the perturbation of the form $\beta
\sum_{i=1}^{N}\delta_{x_{i}}\delta_{x_{i}}^{T}$ of the operator $\mathscr{A}$
may result in the emergence of positive eigenvalues of the operator
$\mathscr{H}_{\beta}$ and the multiplicity of each of them does not exceed
$N$. However, the number of arising eigenvalues of $\mathscr{H}_{\beta}$ in
\cite{Y12_MZ:e} was not found. In \cite{Y15-DAN:e} it was announced, for the
case of both finite and infinite variance of jumps,
that the maximal eigenvalue of the operator $\mathscr{H}_{\beta}$ is of unit
multiplicity, that is simple, and the total multiplicity of all eigenvalues
does not exceed $N$. This implies, in particular, that in fact the
multiplicity of each eigenvalue of the operator $\mathscr{H}_{\beta}$ does
not exceed $N-1$.
For the case of finite variance of jumps this fact has been proved in
\cite{AY15-MPMM:e}. Below we present a unifying approach for such a kind of
results, including the case of infinite variance of jumps.

The structure of the work is as follows. In Sect.~\ref{sec:model}, a formal
description of BRW with $N$ sources is reminded and the motivation of the
work is explained. In Sect.~\ref{Main Res}, Theorem~\ref{T-new-1} on the
conditions of existence of positive eigenvalues of the evolutionary operator
is obtained. In Sect.~\ref{S-WS-BRW}, we establish the main results on the
structure of the positive discrete spectrum of a supercritical BRW,
Theorem~\ref{T-new-3}, and prove that every supercritical BRW is weakly
supercritical, Theorem~\ref{T-new-2}. Notice, that
Theorems~\ref{T-new-1}--\ref{T-new-2} were announced in \cite{Y15-DAN:e}.
Here we present full proofs for such a kind of results, accentuating the case
of infinite variance of jumps. At last, in Sect.~\ref{S-example} we give an
example, demonstrating the influence of `symmetry' of the sources
configuration on appearance of coinciding eigenvalues in the spectrum of the
operator~\eqref{E-defY}.

\section{Preliminaries}
\label{sec:model}

The evolution of the system of particles in a BRW on $\mathbb{Z}^{d}$ is
defined by the number of particles  $\mu_{t}(y)$ at moment $t$ at each point
$y\in \mathbb{Z}^{d}$ assuming that the system contains only one particle
disposed at some point $x\in \mathbb{Z}^{d}$ at $t=0$, i.e.
$\mu_{0}(y)=\delta_{x}(y)$. Thus, the total number of particles on
$\mathbb{Z}^{d}$ satisfies the equation $\mu_{t}=\sum_{y\in
{\mathbb{Z}}^{d}}\mu_{t}(y)$. The transition probability of the random walk
in the BRW is denoted by $p(t,x,y)$. Let ${\mathsf{E}}_x$ be the expectation
of the total number of particles on the condition that
$\mu_0(\cdot)=\delta_x(\cdot)$. Then, the moments obey $
m_n(t,x,y):={\mathsf{E}}_x \mu^n_t(y)$ and $m_n(t,x):={\mathsf{E}}_x
\mu^n_t$, $n\in {\mathbb{N}}$.

Random walk is specified by a matrix $A=(a(x,y))_{x,y \in\mathbb{Z}^{d}}$ of
transition intensities, where $a(x,y)= a(0,x-y)=a(x-y)$ for all $x$ and $y$.
Thus, the transition intensities are spatially homogeneous and the matrix $A$
is symmetric. The law of walk is described in terms of the function $a(z)$,
$z\in\mathbb{Z}^{d}$, where $a(0)<0$, $a(z)\geq 0$ when $z\neq 0$ and
$a(z)\equiv a(-z)$. We assume that $\sum _{z \in \mathbb{Z}^{d}}a(z)=0$, and
that the matrix $A$ is irreducible, i.e. for all $z\in \mathbb{Z}^{d}$ there
exists a set of vectors $z_{1},z_{2},\ldots,z_{k}\in \mathbb{Z}^{d}$ such
that $z=\sum\limits_{i=1}^{k}z_{i}$ and $a(z_{i})\neq0$ for $i=1,2,\ldots,k$.

We use the function $b_{n}$, $n\ge 0$, where $b_n\ge0$ for $n\ne 1$,
$b_1\le0$, and $\sum_{n} b_n=0$, to describe  branching at a source.
Branching occurs at a finite number of sources, $x_{1},\dots,x_{N}$, and is
given by the infinitesimal generating function $f(u)=\sum_{n=0}^\infty b_n
u^n$ such that $\beta_{r}=f^{(r)}(1)<\infty$ for all $r\in\mathbb{N}$. The
quantity $\beta_{1}=f'(1)$ characterizes the intensity of a source and is
denoted further by $\beta$. The sojourn time of a particle at every source is
distributed exponentially with the parameter $-(a(0)+b_{1})$, see
\cite{Y09-1:e}. The finiteness of all moments is used in the proof of limit
theorems about the behavior of the number of particles by the method of
moments \cite{ShT:e}. In what follows, it suffices to assume only the
existence of $\beta$.

By $p(t,x,y)$ we denote the transition probability of a random walk. This
function is implicitly determined by the transition intensities $a(x,y)$
(see, for example, \cite{GS:e,YarBRW:e}). Then, Green's function of the
operator $\mathscr{A}$ can be represented as a Laplace transform of the
transition probability $p(t,x,y)$:
\[
G_\lambda(x,y):=\int\limits_0^\infty e^{-\lambda t}p(t,x,y)\,dt,\quad \lambda\geq 0.
\]

The analysis of BRWs essentially depends on whether the value of
$G_{0}=G_{0}(0,0)$ is finite or infinite. If the variance of jumps is finite,
that is,
\begin{equation}\label{E:findisp}
\sum\limits_{z\in\mathbb{Z}^{d}} |z|^2 a(z)<\infty,
\end{equation}
where $|z|$ is the Euclidian norm  of the vector $z$, then $G_{0}=\infty$ for
$d=1,2$, and $G_{0}<\infty$ for $d\ge 3$ (see, for example, \cite{YarBRW:e}).
If, for all $z\in\mathbb{Z}^{d}$ with sufficiently large norm, the asymptotic
relation
\begin{equation}\label{E:infindisp}
a(z)\sim\frac{H\left(\frac{z}{|z|}\right)}{|z|^{d+\alpha}},\quad\alpha\in(0,2),
\end{equation}
holds, where $H(\cdot)$ is a continuous positive function symmetric on the
sphere $\mathbb{S}^{d-1}=\{z\in\mathbb{R}^{d} : |z|=1\}$ , then
$G_{0}=\infty$ for $d=1$ and $\alpha \in [1,2)$ and $G_{0}$  is finite if $d
= 1$ and $\alpha \in (0,1)$ or $d\geq 2$ and $\alpha \in (0,2)$
\cite{Y13-CommStat:e}. Condition \eqref{E:infindisp}, unlike
\eqref{E:findisp}, leads to the divergence of the series $a(z)$ and, thereby,
to the infinity of the variance of jumps.


The analysis of the BRW model with one branching source in
\cite{ABY98-1:e,BY-2:e,YarBRW:e,YTV10:e} showed that the asymptotic behaviour
of the mean number of particles at arbitrary point as well as on the entire
lattice is determined by the structure of the spectrum of the linear operator
\eqref{E-defY} when $N=1$. In \eqref{E-defY} the bounded self-adjoint
operator $\mathscr{A}$ in Hilbert space $l^{2}(\mathbb{Z}^{d})$  is a
generator of random walk, and $\beta\varDelta_{x_{1}}$ specifies the
mechanism of branching at the source $x_{1}$. Let us note that the operator
$\mathscr{A}$ is generated by the matrix $A$ of transition intensities. This
model has been generalized in \cite{Y12_MZ:e}, in particular, to the case of
$N$ sources.

The transition probability $p(t,\cdot,y)$ is treated as a function $p(t)$
 in $l^{2}(\mathbb{Z}^{d})$ depending on time $t$ and the
parameter $y$. Then according to \cite{YarBRW:e,Y12_MZ:e} we can rewrite the
evolution equation as the following differential equation in space
$l^{2}(\mathbb{Z}^{d})$:
\[\label{E-p}
\frac{d p}{dt}=\mathscr{A} p, \qquad p(0)=\delta_{y},
\]
where the operator $\mathscr{A}$ acts as
\[
(\mathscr{A}u)(z):=\sum_{z'\in\mathbb{Z}^{d}}a(z-z')u(z').
\]
In the same way we can obtain the differential equation in space
$l^{2}(\mathbb{Z}^{d})$ for the expectation $m_{1}(t,\cdot,y)$ which can be
considered as a function $m_{1}(t)$ in $l^{2}(\mathbb{Z}^{d})$:
\begin{equation}\label{Emrenx}
\frac{d{m}_1}{dt}=\mathscr{H}_{\beta}{m}_{1}, \quad {m}_{1}(0)=\delta_{y}.
\end{equation}
Formally, this equation holds for $ m_{1}(t)={m}_1(t,\cdot)$ on the condition
that ${m}_{1}(0)=1$ in space $l^{\infty}(\mathbb{Z}^{d})$.

It follows from the general theory of linear differential equations in Banach
spaces (see, for example, \cite{DK70:e}) that the investigation of behaviour
of solutions of the equation~\eqref{Emrenx} can be reduced to the analysis of
the spectrum of the linear operators in the right-hand sides of the
corresponding equations. Spectral analysis of the operator
$\mathscr{H}_{\beta}$  of type \eqref{E-defY} was done in \cite{Y12_MZ:e}.

\section{Positive Eigenvalues of the Evolutionary Operator}\label{Main Res}

We denote by $\beta_{c}$ the minimal value of intensity of sources such that
the spectrum of the operator $\mathscr{H}_{\beta}$ contains positive
eigenvalues for $\beta>\beta_{c}$ sufficiently close to $\beta_{c}$. By the
requirement of minimality the quantity $\beta_{c}$ is defined uniquely.

\begin{theorem}\label{T-new-1}
Suppose that a BRW is based on a symmetric spatially homogeneous irreducible
random walk and one of conditions \eqref{E:findisp} or \eqref{E:infindisp}
holds.

If $G_{0}=\infty$, then $\beta_{c}=0$ for $N\geq 1$. If $G_{0}< \infty$, then
$\beta_{c}=G_{0}^{-1}$ for $N=1$ and $0<\beta_{c}<G_{0}^{-1}$ for $N\geq 2$.
\end{theorem}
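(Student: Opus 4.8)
The plan is to reduce the eigenvalue problem for the finite-rank perturbation $\mathscr{H}_\beta$ to a finite determinantal condition and then track the largest root as $\lambda\downarrow 0$. Since $\mathscr{A}$ is self-adjoint with $\mathrm{spec}(\mathscr{A})\subset(-\infty,0]$, every $\lambda>0$ lies in the resolvent set, and the resolvent is given by the Green function, $[(\lambda-\mathscr{A})^{-1}\delta_y](x)=G_\lambda(x,y)$. Writing the eigenequation $\mathscr{H}_\beta u=\lambda u$ as $(\lambda-\mathscr{A})u=\beta\sum_{i=1}^N u(x_i)\delta_{x_i}$ and inverting, I would obtain $u(x)=\beta\sum_{i=1}^N u(x_i)G_\lambda(x,x_i)$. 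Evaluating at $x=x_j$ for $j=1,\dots,N$ turns this into the system $v=\beta L_\lambda v$ for $v=(u(x_1),\dots,u(x_N))^T$, where $L_\lambda=(G_\lambda(x_j,x_i))_{j,i}$ is the $N\times N$ Green matrix. Thus a positive eigenvalue of $\mathscr{H}_\beta$ exists at level $\lambda$ precisely when $1/\beta$ is an eigenvalue of $L_\lambda$.

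Next I would record the properties of $L_\lambda$. By spatial homogeneity and symmetry of the walk, $L_\lambda$ is a real symmetric matrix with constant diagonal $G_\lambda(0,0)$; by irreducibility all entries are strictly positive, so the Perron--Frobenius theorem gives a simple largest eigenvalue $\mu_{\max}(L_\lambda)$ with positive eigenvector. Each entry $G_\lambda(x_i,x_j)=\int_0^\infty e^{-\lambda t}p(t,x_i,x_j)\,dt$ is strictly decreasing in $\lambda$, tends to $0$ as $\lambda\to\infty$, and increases to $G_0(x_i,x_j)$ as $\lambda\downarrow 0$ by monotone convergence; moreover the Cauchy--Schwarz bound $p(t,x,y)\le p(t,0,0)$ for a symmetric semigroup yields $G_\lambda(x_i,x_j)\le G_\lambda(0,0)$. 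Because the Perron eigenvalue of a positive symmetric matrix is monotone in its entries (via the variational formula $\mu_{\max}=\max_{\|w\|=1}w^T L_\lambda w$), the function $\lambda\mapsto\mu_{\max}(L_\lambda)$ is continuous and strictly decreasing on $(0,\infty)$, with $\mu_{\max}(L_\lambda)\to 0$ as $\lambda\to\infty$ and $\mu_{\max}(L_\lambda)\uparrow M_0:=\mu_{\max}(L_0)$ as $\lambda\downarrow 0$. Since $\mu_{\max}$ sweeps continuously over $(0,M_0)$, the equation $\mu_{\max}(L_\lambda)=1/\beta$ has a root $\lambda>0$ iff $\beta>1/M_0$; hence $\beta_c=1/M_0$.

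It then remains to evaluate $M_0$ in the three regimes. If $G_0=\infty$, the diagonal entry $G_\lambda(0,0)\to\infty$, so $M_0\ge G_\lambda(0,0)\to\infty$ and $\beta_c=1/M_0=0$ for every $N\ge 1$. If $G_0<\infty$ and $N=1$, then $L_0=(G_0)$ is scalar, $M_0=G_0$, and $\beta_c=G_0^{-1}$. If $G_0<\infty$ and $N\ge 2$, finiteness of all entries (from $G_0(x_i,x_j)\le G_0$) gives $M_0<\infty$, so $\beta_c>0$; and testing the quadratic form on the normalized all-ones vector gives
\begin{equation*}
M_0 \ge \frac1N\sum_{i,j=1}^N G_0(x_i,x_j) = G_0 + \frac1N\sum_{i\ne j}G_0(x_i,x_j) > G_0,
\end{equation*}
the strict inequality coming from the positivity of the off-diagonal entries under irreducibility. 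Hence $\beta_c=1/M_0<G_0^{-1}$.

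The step I expect to be most delicate is the passage $\lambda\downarrow 0$ underlying the identity $\beta_c=1/M_0$: one must justify that the supremum of $\mu_{\max}(L_\lambda)$ over $\lambda>0$ is exactly $M_0$ and that no positive eigenvalue survives at $\beta=\beta_c$ itself, which requires the strict monotonicity and continuity of $\mu_{\max}(L_\lambda)$ together with the possibly-infinite limit of the diagonal. Care is also needed to confirm that the reduction captures all positive eigenvalues, not merely the leading one, i.e. that the largest eigenvalue of $L_\lambda$ controls the threshold, so that the first positive eigenvalue of $\mathscr{H}_\beta$ to emerge is governed by $\mu_{\max}$.
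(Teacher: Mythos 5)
Your proposal is correct, and its skeleton coincides with the paper's: the reduction of the eigenvalue problem for $\mathscr{H}_\beta$ to the condition that $1/\beta$ be an eigenvalue of the $N\times N$ Green matrix is exactly the paper's Lemma~\ref{L1}, and the threshold is likewise identified as the reciprocal of $\lim_{\lambda\downarrow0}\gamma_{0}(\lambda)$. Where you genuinely diverge is in the two supporting steps. First, for monotonicity the paper proves that \emph{all} eigenvalues $\gamma_{i}(\lambda)$ strictly decrease (Lemma~\ref{L2}), by showing via the Fourier representation \eqref{E-Green} that $\Gamma(\lambda_{1})-\Gamma(\lambda_{2})$ is positive-definite with the quantitative lower bound of Lemma~\ref{L3}, and then invoking Weyl's eigenvalue inequality; you instead prove strict decrease only of the top eigenvalue, using entrywise monotonicity of $G_\lambda$ in the Laplace-transform representation together with the Perron eigenvector as a test vector in the Rayleigh quotient. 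Your version is more elementary and suffices for Theorem~\ref{T-new-1} (since the lower eigenvalues are dominated by $\mu_{\max}$, which you correctly flag as the point to check), but the paper's stronger lemma is reused later for Theorem~\ref{T-new-3}, so it is not wasted generality there. Second, for the strict inequality $\gamma_{0}(0)>G_{0}$ when $N\ge2$, the paper decomposes $\Gamma(0)=G_{0}I+B(0)$ and argues from the positivity of the Perron eigenvector that $(G_{0}-\gamma_{0}(0))x(0)+B(0)x(0)=0$ forces $\gamma_{0}(0)>G_{0}$; your all-ones test vector in the variational formula, giving $\mu_{\max}(L_{0})\ge G_{0}+\frac{1}{N}\sum_{i\ne j}G_{0}(x_{i},x_{j})>G_{0}$, is a simpler route to the same bound (your Cauchy--Schwarz bound $p(t,x,y)\le p(t,0,0)$, replacing the paper's cosine-representation argument of Lemma~\ref{L-Gprop}, is also valid). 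Finally, your argument treats both jump regimes uniformly: because the statement is already conditioned on $G_{0}$ being finite or infinite, you never need the symbol estimates $c_{1}\|\theta\|^{\alpha}\le|\phi(\theta)|\le c_{2}\|\theta\|^{\alpha}$ that the paper uses in the heavy-tail case, nor the citation to which the paper delegates the finite-variance case --- a modest but genuine gain in economy. The only points worth tightening are cosmetic: in the forward direction of your reduction you should note that the vector $v=(u(x_{1}),\dots,u(x_{N}))^{T}$ is nonzero (otherwise $(\lambda-\mathscr{A})u=0$ with $\lambda$ in the resolvent set forces $u=0$), and in the $G_{0}=\infty$ case the quantity diverging is $\sup_{\lambda>0}\mu_{\max}(L_{\lambda})$ rather than the (then undefined) $M_{0}=\mu_{\max}(L_{0})$.
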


To prove Theorem~\ref{T-new-1} we will need some auxiliary facts.

\begin{lemma}\label{L1}
The quantity $\lambda>0$ is an eigenvalue of the operator $\mathscr{H}_\beta$
if and only if at least one of the equalities
\begin{equation}\label{E:glbeq1}
\gamma_i(\lambda)\beta = 1, \quad i= 0,\ldots, N-1,
\end{equation}
holds, where $\gamma_i(\lambda)$ are the eigenvalues of the matrix
$\Gamma(\lambda)$ given by the equality
\begin{equation}\label{Def:G}
\Gamma(\lambda)=[\Gamma_{ij}(\lambda)],
\end{equation}
with the elements $\Gamma_{ij}(\lambda)=G_{\lambda}(x_{i},x_{j})>0$, where
$x_{1}, x_{2},\ldots, x_{N}$ are branching sources.
\end{lemma}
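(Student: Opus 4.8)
The plan is to treat the eigenvalue problem $\mathscr{H}_{\beta}u=\lambda u$ as a finite-rank perturbation of $\mathscr{A}$ and reduce it to an $N\times N$ linear system. Since $\delta_{x_i}^{T}u=u(x_i)$, the equation defined by \eqref{E-defY} reads
\[
(\lambda I-\mathscr{A})u=\beta\sum_{i=1}^{N}u(x_i)\,\delta_{x_i}.
\]
First I would record that for $\lambda>0$ the operator $\lambda I-\mathscr{A}$ is boundedly invertible on $l^{2}(\mathbb{Z}^{d})$: the symbol of $\mathscr{A}$ satisfies $\widehat{a}(\theta)=\sum_{z}a(z)e^{i\theta\cdot z}\le\sum_{z}a(z)=0$, so $\sigma(\mathscr{A})\subseteq(-\infty,0]$ and every $\lambda>0$ lies in the resolvent set. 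Moreover the resolvent is exactly the Green operator with kernel $G_{\lambda}(x,y)$, since integrating $\tfrac{dp}{dt}=\mathscr{A}p$ gives $\int_{0}^{\infty}e^{-\lambda t}e^{t\mathscr{A}}\,dt=(\lambda I-\mathscr{A})^{-1}$ for $\lambda$ above the top of the spectrum; the integral converges because $\|e^{t\mathscr{A}}\|\le 1$. This is the only place where the distinction between finite and infinite variance of jumps matters, and it enters solely through finiteness of $G_{\lambda}$: one has $G_{\lambda}(x,y)\le G_{\lambda}(0,0)\le\lambda^{-1}<\infty$ and $G_{\lambda}(x,y)>0$ by irreducibility, so $\Gamma(\lambda)$ in \eqref{Def:G} is a well-defined symmetric matrix with strictly positive entries.

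Next I would apply $(\lambda I-\mathscr{A})^{-1}=G_{\lambda}$ to both sides, obtaining
\[
u=\beta\sum_{i=1}^{N}u(x_i)\,G_{\lambda}(\cdot,x_i).
\]
Evaluating this identity at each source $x_j$ and setting $v=(u(x_1),\ldots,u(x_N))^{T}\in\mathbb{C}^{N}$ turns it into the finite system $v=\beta\,\Gamma(\lambda)v$, using $\Gamma_{ji}(\lambda)=G_{\lambda}(x_j,x_i)$. Thus a nonzero $v$ exists if and only if $1/\beta$ is an eigenvalue of the symmetric matrix $\Gamma(\lambda)$, i.e. if and only if $\beta\gamma_i(\lambda)=1$ for some $i\in\{0,\ldots,N-1\}$, which is precisely \eqref{E:glbeq1}.

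It then remains to match nonvanishing of $u$ and of $v$ in the two directions. For the forward implication, given an eigenfunction $u\neq 0$ I would argue $v\neq 0$: if all $u(x_i)$ vanished, the displayed equation would give $(\lambda I-\mathscr{A})u=0$, forcing $\lambda\in\sigma(\mathscr{A})\subseteq(-\infty,0]$ and contradicting $\lambda>0$. For the converse, starting from an eigenvector $v\neq 0$ of $\Gamma(\lambda)$ with $\beta\gamma_i(\lambda)=1$, I would set $u:=\beta\sum_{i}v_i\,G_{\lambda}(\cdot,x_i)$, apply $\lambda I-\mathscr{A}$ to get $(\lambda I-\mathscr{A})u=\beta\sum_{i}v_i\delta_{x_i}$, and check $u(x_j)=\beta(\Gamma(\lambda)v)_j=v_j$ (since $\beta\Gamma(\lambda)v=v$); this yields $\mathscr{H}_{\beta}u=\lambda u$, and $u\neq 0$ because the values $u(x_j)=v_j$ are not all zero.

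The computation is routine; the only point requiring genuine care is the resolvent representation together with the verification that $\lambda>0$ belongs to the resolvent set of $\mathscr{A}$. This single fact both legitimizes passing from the operator equation to the finite-dimensional system and simultaneously rules out the degenerate case $v=0$ in the forward direction. Everything else reduces to bookkeeping with the symmetric matrix $\Gamma(\lambda)$.
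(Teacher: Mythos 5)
Your proposal is correct and follows essentially the same route as the paper: both reduce the eigenvalue equation for $\mathscr{H}_{\beta}$ by applying the resolvent of $\mathscr{A}$ at $\lambda>0$, evaluate at the sources to obtain the $N\times N$ system governed by $\Gamma(\lambda)$, and conclude via $\det\left(\beta\Gamma(\lambda)-I\right)=0$. The only differences are cosmetic and in your favor: you identify the resolvent kernel with $G_{\lambda}$ through the Laplace transform of the semigroup rather than the paper's Fourier computation, and you spell out the nonvanishing of $v$ and the reconstruction $u=\beta\sum_{i}v_{i}G_{\lambda}(\cdot,x_{i})$, details the paper leaves implicit.
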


The idea of reducing, under appropriate conditions, the infinite-dimensional
eigenvalue problem to a finite-dimensional one is not new, see, e.g.
\cite{Kato:JMSJ57,Kuroda:PJM63,SL:FAP86,AZ:IEOT99}. Lemma~\ref{L1} can be
derived from a more general statement, Theorem~6 from \cite{Y12_MZ:e}, but
for the sake of completeness of exposition we prefer to give below its full
proof.

\begin{proof}
The quantity $\lambda>0$ is an eigenvalue of the operator
$\mathscr{H}_\beta=\mathscr{A} + \beta\sum_{i=1}^N
\delta_{x_{i}}\delta_{x_{i}}^{T}$ if and only if the following equation holds
for some vector $h\neq0$:
\[
\mathscr{A}h + \beta\sum_{i=1}^N \delta_{x_{i}}\delta_{x_{i}}^{T} h = \lambda h.
\]
Let $R_\lambda = (\mathscr{A} - \lambda I)^{-1}$ be the resolvent of the
operator $\mathscr{A}$. By applying $R_\lambda$ to both sides of the last
equation we obtain
\[
h + \beta \ \sum_{i=1}^N R_\lambda\delta_{x_{i}}\delta_{x_{i}}^{T} h = 0.
\]
Since $\delta_{x}\delta_{x}^{T} h = \delta_{x}(\delta_{x}, h)$, then
\[
h + \beta \ \sum_{i=1}^N  (\delta_{x_i}, h) R_\lambda \delta_{x_i} = 0.
\]
Let us scalar left-multiply the last equation  by $\delta_{x_k}$:
\[
(\delta_{x_k}, h) + \sum_{i=1}^n \beta (\delta_{x_i}, h) (\delta_{x_k},
R_\lambda \delta_{x_i}) = 0, \,\,\,k=1,\ldots,n.
\]
By denoting $U_k = (\delta_{x_k}, h)$ we then obtain
\begin{equation}
\label{E-linsys}
U_k + \sum_{i=1}^n \beta U_i (\delta_{x_k}, R_\lambda \delta_{x_i}) = 0, \quad k=1,\ldots,n.
\end{equation}
Thus, the initial equation has a nonzero solution $h$ if and only if the
determinant of the matrix of the obtained linear system is equal to zero. Now
we notice that
\begin{equation*}
(\delta_y, R_\lambda\delta_x) = - \frac{1}{(2\pi)^d} \int_{[-\pi,\pi]^d} \frac{e^{i(\theta, y-x)}}{\lambda - \phi(\theta)}d\theta,
\end{equation*}
where $\phi(\theta) = \sum_{z\in\mathbb Z^d} a(z) e^{i(\theta,z)}$ with
$\theta\in [-\pi,\pi]^d$ is the Fourier transform of the function of
transition probabilities $a(z)$. The right-hand side of the equation can be
represented \cite{YarBRW:e} in terms of Green's function:
\begin{equation}\label{E-Green}
G_\lambda(x,y) := \int_0^\infty e^{-\lambda t} p(t,x,y)dt =
\frac{1}{(2\pi)^d} \int_{[-\pi,\pi]^d} \frac{e^{i(\theta, y-x)}}{\lambda -
\phi(\theta)}d\theta.
\end{equation}
Hence $ (\delta_y, R_\lambda\delta_x)=-G_\lambda(x,y). $ It implies that the
condition of vanishing of the determinant of the linear system
\eqref{E-linsys} can be rewritten as $\det
\left(\beta\Gamma(\lambda)-I\right) = 0,$ which is equivalent to
equation~\eqref{E-det} when $\beta\neq0$.

Notice, that representation \eqref{E-Green} implies the inequalities
$\Gamma_{ij}(\lambda)=G_{\lambda}(x_{i},x_{j})>0$, that is the matrix
$\Gamma(\lambda)$ is positive.

Recalling that the eigenvalues of the matrix $\Gamma(\lambda)$ are denoted by
$\gamma_i(\lambda)$, where $i=0,\ldots,N-1$, we obtain that \eqref{E-det}
holds for some $\beta$ and $\lambda$ if and only if \eqref{E:glbeq1} is true.
The lemma is proved.
\end{proof}

\begin{lemma}\label{L3}
Let $Q=(q_{ij})$ be a matrix with elements
\begin{equation*}
q_{ij}=\frac{1}{(2\pi)^d} \int_{[-\pi,\pi]^d} q(\theta) e^{i(\theta, x_{i}-x_{j})}d\theta,
\end{equation*}
where $x_{1},x_{2},\ldots,x_{N}$ is a set of linearly independent vectors,
and $q(\theta)\geq q_{*}>0$ is an even function summable on $[-\pi,\pi]^d$.
Then $Q$ is a real, symmetric and positive-definite matrix satisfying
$(Qz,z)\geq q_{*}( z,z )$.
\end{lemma}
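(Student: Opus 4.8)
The plan is to read off all three conclusions from a single integral representation of the quadratic form $(Qz,z)$, treating reality and symmetry on the one hand and the quantitative positive-definiteness bound on the other. First I would establish that $Q$ is real and symmetric directly from the evenness of $q$. Splitting $e^{i(\theta,x_i-x_j)}=\cos(\theta,x_i-x_j)+i\sin(\theta,x_i-x_j)$, the integrand $q(\theta)\sin(\theta,x_i-x_j)$ is an odd function of $\theta$ on the symmetric domain $[-\pi,\pi]^d$, so its integral vanishes and $q_{ij}=\frac{1}{(2\pi)^d}\int q(\theta)\cos(\theta,x_i-x_j)\,d\theta\in\mathbb{R}$. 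The same computation, together with the evenness of cosine, gives $q_{ij}=q_{ji}$; equivalently one substitutes $\theta\mapsto-\theta$ and uses $q(-\theta)=q(\theta)$. Hence $Q$ is a real symmetric matrix.

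For the quantitative lower bound I would expand the quadratic form, interchanging the finite sum with the integral, to obtain
\[
(Qz,z)=\sum_{i,j}q_{ij}z_j\overline{z_i}=\frac{1}{(2\pi)^d}\int_{[-\pi,\pi]^d}q(\theta)\Big|\sum_{j}z_j e^{-i(\theta,x_j)}\Big|^2 d\theta,
\]
which exhibits the form as a nonnegative weighted integral of a squared trigonometric polynomial. Using the pointwise hypothesis $q(\theta)\ge q_*>0$ then yields
\[
(Qz,z)\ge \frac{q_*}{(2\pi)^d}\int_{[-\pi,\pi]^d}\Big|\sum_{j}z_j e^{-i(\theta,x_j)}\Big|^2 d\theta.
\]

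The crux, and the step I expect to actually use the hypothesis on the $x_i$, is evaluating this remaining integral. Expanding the square and using that the sources are distinct lattice points (guaranteed here by their linear independence), the orthogonality relation $\frac{1}{(2\pi)^d}\int_{[-\pi,\pi]^d}e^{i(\theta,x_i-x_j)}\,d\theta=\delta_{ij}$ collapses the double sum to $\sum_i|z_i|^2=(z,z)$, giving $(Qz,z)\ge q_*(z,z)$. Since $q_*>0$, this lower bound forces $(Qz,z)>0$ for every $z\neq0$, so $Q$ is positive-definite, completing the proof. The only point needing care is precisely this orthonormality of the lattice exponentials $e^{i(\theta,x_j)}$ on $[-\pi,\pi]^d$: it relies on the differences $x_i-x_j$ lying in $\mathbb{Z}^d\setminus\{0\}$ for $i\neq j$, and were the $x_j$ merely distinct real vectors the Gram matrix of these exponentials would fail to be the identity, so the clean bound $q_*(z,z)$ would no longer hold.
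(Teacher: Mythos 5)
Your proof is correct and follows essentially the same route as the paper's: evenness of $q$ gives reality and symmetry via the cosine representation, the quadratic form is rewritten as $\frac{1}{(2\pi)^d}\int_{[-\pi,\pi]^d} q(\theta)\bigl|\sum_{j} z_j e^{-i(\theta,x_j)}\bigr|^2\,d\theta$, and the pointwise bound $q(\theta)\ge q_*$ combined with the orthogonality of the lattice exponentials collapses the cross terms to yield $(Qz,z)\ge q_*(z,z)$. Your closing remark that only the distinctness of the $x_j$ as points of $\mathbb{Z}^d$ (rather than their linear independence) is actually used agrees with the paper, whose proof likewise invokes only $x_i\neq x_j$ for $i\neq j$.
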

\begin{proof}
The matrix $Q$ is real and symmetric since the function $q(\theta)$ is even
and then
\begin{equation*}
q_{ij}=\frac{1}{(2\pi)^d} \int_{[-\pi,\pi]^d} q(\theta)\cos(\theta, x_{i}-x_{j}) d\theta.
\end{equation*}
Thus, we need to prove only that the matrix $Q$ is positive-definite. By
definition, $ (Qz,z) = \sum_{i,j=1}^{N} q_{ij} z_i z_j, $ where $z = (z_1,
z_2, \ldots, z_N)$. Then
\begin{align*}
\left( Qz,z \right)&=\frac{1}{(2\pi)^d}
\sum_{i,j=1}^{N} \int_{[-\pi,\pi]^d} q(\theta)e^{i(\theta, x_{i}-x_{j})}z_{i}z_{j}d\theta\\
&=\frac{1}{(2\pi)^d}
\sum_{i=1}^{N}\sum_{j=1}^{N} \int_{[-\pi,\pi]^d} q(\theta)\left(e^{i(\theta, x_{i})}z_{i}\right)
\left(e^{-i(\theta,x_{j})}z_{j}\right)d\theta\\
&=\frac{1}{(2\pi)^d}
\int_{[-\pi,\pi]^d} q(\theta)\sum_{i=1}^{N}\sum_{j=1}^{N} \left(e^{i(\theta, x_{i})}z_{i}\right)
\left(e^{-i(\theta,x_{j})}z_{j}\right)d\theta\\
&=\frac{1}{(2\pi)^d}
\int_{[-\pi,\pi]^d} q(\theta)\left|e^{i(\theta, x_{1})}z_{1}+\cdots+e^{i(\theta, x_{N})}z_{N}\right|^{2}d\theta\ge 0.
\end{align*}
Since $q(\theta)\geq q_{*}$, then
\begin{align*}
\left( Qz,z \right)&\ge
\frac{q_{*}}{(2\pi)^d}
\int_{[-\pi,\pi]^d}\left|e^{i(\theta, x_{1})}z_{1}+\cdots+e^{i(\theta, x_{N})}z_{N}\right|^{2}d\theta\\
&=\frac{q_{*}}{(2\pi)^d}
\int_{[-\pi,\pi]^d}\left(z^{2}_{1}+\cdots+z^{2}_{N}+\sum_{i\neq j}\left(e^{i(\theta, x_{i})}z_{i}\right)
\left(e^{-i(\theta,x_{j})}z_{j}\right)\right)d\theta.
\end{align*}
The integral of the summands $\left(e^{i(\theta, x_{i})}z_{i}\right)
\left(e^{-i(\theta,x_{j})}z_{j}\right)$ vanishes, since $x_i \ne x_j$ for $i
\ne j$. Then the integral in the right-hand side of the equation can be
calculated explicitly and is equal to $q_* (z_1^2 + \ldots + z_N^2)$. So,
$(Qz,z) \geq q_* (z,z)$. The lemma is proved.
\end{proof}

\begin{lemma}\label{L2}
Each of the functions $\gamma_{i}(\lambda)$ is strictly decreasing when
$\lambda\ge0$. Moreover, the total number of solutions $\lambda_{i}(\beta)$
of the equations $\gamma_i(\lambda)\beta = 1$, $i = 0,\ldots, N-1$, does not
exceed $N$, that is the number of eigenvalues of the operator~\eqref{E-defY}
does not exceed $N$.
\end{lemma}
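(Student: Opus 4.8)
The plan is to prove Lemma~\ref{L2} in two parts. First I would establish that each $\gamma_i(\lambda)$ is strictly decreasing for $\lambda \ge 0$, and then I would use this monotonicity together with the positive-definiteness established in Lemma~\ref{L3} to bound the total number of solutions of the equations $\gamma_i(\lambda)\beta = 1$ by $N$.

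\medskip

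For the monotonicity, the key observation is that the matrix entries $\Gamma_{ij}(\lambda) = G_\lambda(x_i,x_j)$ from Lemma~\ref{L1} are each strictly decreasing in $\lambda$: differentiating the Laplace-transform representation $G_\lambda(x_i,x_j) = \int_0^\infty e^{-\lambda t} p(t,x_i,x_j)\,dt$ under the integral sign gives $\frac{d}{d\lambda}\Gamma_{ij}(\lambda) = -\int_0^\infty t\,e^{-\lambda t} p(t,x_i,x_j)\,dt$. Since $p(t,x_i,x_j) > 0$ (by irreducibility of the walk), the derivative matrix $\Gamma'(\lambda)$ is itself of the form treated in Lemma~\ref{L3} with $q(\theta) = t/(\lambda-\phi(\theta))$ integrated against a positive weight, so $-\Gamma'(\lambda)$ is positive-definite. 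The eigenvalues $\gamma_i(\lambda)$ of the symmetric matrix $\Gamma(\lambda)$ depend smoothly on $\lambda$, and by the Hellmann--Feynman / first-order perturbation formula, $\gamma_i'(\lambda) = (v_i, \Gamma'(\lambda) v_i)$ for a corresponding unit eigenvector $v_i$; since $\Gamma'(\lambda)$ is negative-definite, this quantity is strictly negative. Hence each $\gamma_i$ is strictly decreasing.

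\medskip

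For the counting bound, the strict monotonicity shows that for each fixed $i$ the equation $\gamma_i(\lambda)\beta = 1$, equivalently $\gamma_i(\lambda) = 1/\beta$, can have at most one solution $\lambda_i(\beta) \ge 0$, because the left side is strictly decreasing in $\lambda$ while the right side is constant. Running over $i = 0, \ldots, N-1$ yields at most $N$ solutions in total. By Lemma~\ref{L1}, $\lambda > 0$ is an eigenvalue of $\mathscr{H}_\beta$ precisely when at least one equation $\gamma_i(\lambda)\beta = 1$ holds, so the number of positive eigenvalues (counted by the branches $i$) does not exceed $N$.

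\medskip

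The main obstacle I anticipate is the rigorous justification of $\gamma_i'(\lambda) < 0$ at points where eigenvalues of $\Gamma(\lambda)$ may collide or cross, since the perturbation formula for individual eigenvalues requires care when eigenvalues are not simple. This is handled cleanly by working with the full quadratic form: for any unit vector $v$, the Rayleigh quotient $(v, \Gamma(\lambda) v)$ is strictly decreasing in $\lambda$ because $(v, \Gamma'(\lambda) v) < 0$, and then the min-max (Courant--Fischer) characterization $\gamma_i(\lambda) = \min_{\dim S = i+1}\max_{v \in S, \|v\|=1}(v,\Gamma(\lambda)v)$ transfers the strict decrease of every Rayleigh quotient to each ordered eigenvalue, avoiding any assumption about multiplicities or analyticity of eigenvectors.
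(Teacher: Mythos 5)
Your proof is correct, and the counting half coincides with the paper's, but your monotonicity argument takes a genuinely different route. The paper never differentiates: it forms the difference matrix $\Gamma(\lambda_{1},\lambda_{2})=\Gamma(\lambda_{1})-\Gamma(\lambda_{2})$, observes via \eqref{E-Green} that its entries are $(\lambda_{2}-\lambda_{1})$ times an integral of Lemma~\ref{L3} type with weight $q(\theta)=\bigl((\lambda_{1}-\phi(\theta))(\lambda_{2}-\phi(\theta))\bigr)^{-1}\ge q_{*}(\lambda_{1},\lambda_{2})>0$, hence positive-definite for $\lambda_{2}>\lambda_{1}$, and then invokes Weyl's eigenvalue-monotonicity theorem \cite[Thm.~4.3.1]{HJ:e} to get a \emph{quantitative} lower bound on the decrement $\gamma_{i}(\lambda_{1})-\gamma_{i}(\lambda_{2})$ for all $i$ simultaneously, with no smoothness of eigenvalue branches invoked anywhere. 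Your derivative-based argument reaches the same conclusion, and your final min-max repair is sound and needed: picking a minimizing subspace $S_{1}$ for $\gamma_{i}(\lambda_{1})$ gives $\gamma_{i}(\lambda_{2})\le\max_{v\in S_{1},\|v\|=1}(v,\Gamma(\lambda_{2})v)<\max_{v\in S_{1},\|v\|=1}(v,\Gamma(\lambda_{1})v)=\gamma_{i}(\lambda_{1})$, which correctly sidesteps the Hellmann--Feynman formula at eigenvalue crossings, where individual branches need not be differentiable in the ordered labelling. Two details in your version want repair, though neither is fatal: the weight ``$q(\theta)=t/(\lambda-\phi(\theta))$'' conflates the time and Fourier pictures --- differentiating \eqref{E-Green} gives $-\Gamma_{ij}'(\lambda)=\frac{1}{(2\pi)^{d}}\int_{[-\pi,\pi]^{d}}e^{i(\theta,x_{j}-x_{i})}(\lambda-\phi(\theta))^{-2}\,d\theta$, so the correct Lemma~\ref{L3} weight is $q(\theta)=(\lambda-\phi(\theta))^{-2}\ge(\lambda+s)^{-2}$; and entrywise positivity of $-\Gamma'(\lambda)$ (from $p(t,x_{i},x_{j})>0$) does not by itself yield negative-definiteness of $\Gamma'(\lambda)$, so the Lemma~\ref{L3} step is essential, along with a (routine, for $\lambda>0$) justification of differentiation under the integral. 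In exchange for these extra justifications your route is the standard perturbation-theoretic one and generalizes to any matrix family with negative-definite derivative, whereas the paper's finite-difference argument is more elementary --- it requires no differentiability at all, works uniformly down to $\lambda=0$ even when $\Gamma(\lambda)$ blows up there, and yields an explicit spectral decrement, which the paper reuses later.
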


\begin{proof}
Let the eigenvalues $\gamma_{i}(\lambda)$ of the matrix~\eqref{Def:G} be
arranged in decreasing order:
\begin{equation*}
0\leq\gamma_{N-1}(\lambda)\leq \ldots \leq \gamma_{1}(\lambda)\le\gamma_{0}(\lambda).
\end{equation*}
Let us consider the matrix
\begin{eqnarray*}
\Gamma(\lambda_{1},\lambda_{2}):=\Gamma(\lambda_{1})-\Gamma(\lambda_{2}).
\end{eqnarray*}

Taking
\[
q(\theta)=\frac{1}{\lambda-\phi(\theta)}
\]
we obtain for $\lambda>0$ that
\begin{equation*}
q(\theta)\ge
\frac{1}{\lambda+s}>0,
\end{equation*}
where $s=\max_{{\theta}\in [-\pi,\pi]^d} \{-\phi(\theta)\}>0$. Hence, by
Lemma~\ref{L3} for each $\lambda>0$ the matrix $\Gamma(\lambda)$ defined by
\eqref{Def:G} and \eqref{E-Green} is real, symmetric and positive-definite.

From \eqref{Def:G} and \eqref{E-Green} we obtain also that the elements of
the matrix $\Gamma(\lambda_{1},\lambda_{2})$ are as follows
\begin{equation*}
\Gamma_{ij}(\lambda_{1},\lambda_{2})
=(\lambda_{2}-\lambda_{1})
\frac{1}{(2\pi)^d} \int_{[-\pi,\pi]^d} \frac{e^{i(\theta, x_{i}-x_{j})}}{(\lambda_{1}-\phi(\theta))(\lambda_{2}-\phi(\theta))}d\theta.
\end{equation*}
For the continuous function
\begin{equation*}
q(\theta)=\frac{1}{(\lambda_{1}-\phi(\theta))(\lambda_{2}-\phi(\theta))}
\end{equation*}
we have the lower bound
\begin{equation*}
q(\theta)\ge q_{*}(\lambda_{1},\lambda_{2}):=\frac{1}{(\lambda_{1}+s)(\lambda_{2}+s)}>0.
\end{equation*}
Hence, again by Lemma~\ref{L3} the matrix $\Gamma(\lambda_1,\lambda_2)$ is
self-adjoint and positive-definite when $\lambda_2 > \lambda_1$. In this case
the Weyl theorem \cite[Thm.~4.3.1]{HJ:e} implies, for all $i=0,\ldots, N-1$,
the inequalities
\begin{equation*}
\gamma_{i}(\lambda_{1})- \gamma_{i}(\lambda_{2})\geq q_{*}(\lambda_{1},\lambda_{2})>0,
\end{equation*}
since the minimal eigenvalue of the matrix $\Gamma(\lambda_1,\lambda_2)$ has
the lower bound equal to $q_{*}(\lambda_1,\lambda_2)$. So,
$\gamma_i(\lambda_1) > \gamma_i(\lambda_2)$ when $\lambda_2 > \lambda_1$,
that is the function $\gamma_i(\lambda)$ is strictly decreasing with respect
to $\lambda$.

Since the functions $\gamma_i(\lambda)$ are strictly decreasing then each of
the equations $\gamma_{i}(\lambda)\beta=1$, where $i=0,\ldots,N-1$, for each
$\beta$ has no more than one solution (the eigenvalue of the operator
$\mathcal{H}_\beta$). So, the total amount of the eigenvalues of the operator
$\mathcal{H}_\beta$ does not exceed $N$. The lemma is proved.
\end{proof}

Now, we are ready to prove Theorem~\ref{T-new-1}.

\begin{proof}[Proof of Theorem~\ref{T-new-1}]
For the case when condition \eqref{E:findisp} holds the theorem is proved in
\cite{AY15-MPMM:e}. So, we will consider only the case when condition
\eqref{E:infindisp} is fulfilled.

From the integral representation~\eqref{E-Green} of the function
$\phi(\theta)$ it follows that
\begin{equation}\label{E-alphaest}
c_{1}\|\theta\|^{\alpha} \leqslant
|\phi(\theta)| \leqslant c_{2}\|\theta\|^{\alpha}
\end{equation}
for some nonzero real constants $c_{1}$ and $c_{2}$
\cite{Y13-CommStat:e,NPSMDA-Y14,Koz:INFOPROC15:e}. Then, for $\lambda=0$, we
have
\[
G_{0}=
\int_{0}^{\infty}p(t)\,dt=
\frac{1}{(2\pi)^d}\int\limits_{[-\pi,\pi]^d}
\frac{d\theta}{(-\phi(\theta))}\leq
 \frac{1}{(2\pi)^d}\int\limits_{[-\pi,\pi]^d}\frac{d\theta}{c_{1}|\theta|^{\alpha}}.
\]
Here, all the integrals converge when $\alpha<d$ and diverge when $\alpha\geq
d$. If $G_\lambda(x,y) \to \infty$ as $\lambda \to 0$ then $\|\Gamma(\lambda)
\| \to \infty$ and the principal eigenvalue of matrix $\Gamma(\lambda)$ tends
to infinity as $\lambda \to 0$, $\gamma_0(\lambda)\to\infty$. Hence in this
case for all $\beta>0$ the equation $\gamma_0(\lambda)\beta=1$ has a solution
(with respect to $\lambda$) and by definition of $\beta_{c}$ we have that
$\beta_c = 0$.

Let now $G_0 < \infty$, then $G_0(x,y) < \infty$ for all $x$ and $y$. So, in
this case $\|\Gamma(0)\|<\infty$ and, moreover, $\Gamma(\lambda)\to
\Gamma(0)$ as $\lambda\to0$. Then there exists $\gamma_{*}<\infty$ such that
$\gamma_0(\lambda)\le\gamma_{*} < \infty$ for all $\lambda$. In this case the
equation $\gamma_0(\lambda)\beta=1$ does not have solutions (with respect to
$\lambda$) as $\beta \to 0$. By Lemma~\ref{L1} in this case the operator
$\mathscr{H}_\beta$ does not have eigenvalues when $\beta$ is small, i.e.
$\beta_c> 0$.

It remains to prove the upper bound for $\beta_c$. Let $\beta$ be an
arbitrary value of the parameter such that the operator $\mathscr{H}_\beta$
has a positive eigenvalue $\lambda$. Then by Lemma~\ref{L1} we have $
\gamma_{0}(\lambda)\beta=1, $ and hence
\begin{equation}
\label{E:betaest}
\beta=\frac{1}{\gamma_{0}(\lambda)}.
\end{equation}

By Lemma~\ref{L1} the matrix $\Gamma(\lambda)$ is positive. Then the
Perron-Frobenius theorem~\cite[Thm.~8.2.11]{HJ:e} implies that the principal
eigenvalue $\gamma_0(\lambda)$ of the matrix $\Gamma(\lambda)$ has a
corresponding eigenvector $x(\lambda)$ with all positive coordinates.

Let us represent the matrix $\Gamma(\lambda)$ as
\begin{equation*}
\Gamma(\lambda)=G_{\lambda}(0,0)I+ B(\lambda),
\end{equation*}
where $B(\lambda)=\Gamma(\lambda)-G_{\lambda}(0,0)I$. Then, in the case $N\ge
2$, all elements of the matrix $B(\lambda)$ are non-negative while its
off-diagonal elements are positive. Therefore, by definition of the
eigenvector $x(\lambda)$ the following equalities hold:
\begin{equation*}
0=\Gamma(\lambda)x(\lambda)-\gamma_{0}(\lambda)x(\lambda)=
(G_{\lambda}(0,0)-\gamma_{0}(\lambda))x(\lambda)+B(\lambda)x(\lambda).
\end{equation*}
Hence, when $\lambda=0$,
\begin{equation*}
0=\Gamma(0)x(0)-\gamma_{0}(0)x(0)=
(G_{0}(0,0)-\gamma_{0}(0))x(0)+B(0)x(0).
\end{equation*}
The vector $x(0)$ has positive coordinates and therefore the vector
$B(0)x(0)$ also has positive coordinates. So, the last equation holds only if
$ G_{0}(0,0)-\gamma_{0}(0)<0. $ Then by \eqref{E:betaest} we obtain
\begin{equation*}
\beta_{c}=\frac{1}{\gamma_{0}(0)}<\frac{1}{G_{0}(0,0)}=\frac{1}{G_{0}}.
\end{equation*}

For $N=1$ the critical value $\beta_c$ can be found from the equation
$\beta_c G_0 = 1$ and equals $\beta_c = \frac{1}{G_0}$.
\end{proof}

\section{Structure of the Positive Discrete Spectrum}\label{S-WS-BRW}

If there exists $\varepsilon_{0}>0$ such that the operator
$\mathscr{H}_{\beta}$ has a simple positive eigenvalue $\lambda(\beta)$ when
$\beta\in(\beta_{c},\beta_{c}+\varepsilon_{0})$ and this eigenvalue satisfies
a condition $\lambda(\beta)\to 0$ as $\beta\downarrow \beta_{c}$, then we
call supercritical BRW \emph{weakly supercritical} when $\beta$ is close to
$\beta_{c}$ \cite{Y15-DAN:e}.

In connection with this definition, the question naturally arises
\emph{whether every supercritical BRW is weakly supercritical}? The
affirmative answer to this question is given in Theorem~\ref{T-new-2} below
which is a straightforward corollary of the following stronger statement.

\begin{theorem}\label{T-new-3}
Let the  transition intensities $a(z)$ satisfy \eqref{E:findisp} or
\eqref{E:infindisp}, and let $N\geq 2$. Then the operator
$\mathscr{H}_{\beta}$ may have no more than $N$ positive eigenvalues
$\lambda_{i}(\beta)$ of finite multiplicity when $\beta>\beta_{c}$, and
\begin{equation*}
\lambda_{0}(\beta)>\lambda_{1}(\beta)\geq\cdots\geq\lambda_{N-1}(\beta)>0,
\end{equation*}
where the principal eigenvalue $\lambda_{0}(\beta)$ has unit  multiplicity.
Besides there is a value $\beta_{c_{1}}>\beta_{c}$ such that for $\beta\in
(\beta_{c},\beta_{c_{1}})$ the operator $\mathscr{H}_{\beta}$ has a single
positive eigenvalue, $\lambda_{0}(\beta)$.
\end{theorem}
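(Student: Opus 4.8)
The plan is to read off the whole statement from the eigenvalue curves $\gamma_i(\lambda)$ of the matrix $\Gamma(\lambda)$, ordered as $\gamma_0(\lambda)\ge\gamma_1(\lambda)\ge\cdots\ge\gamma_{N-1}(\lambda)\ge0$. By Lemma~\ref{L2} each $\gamma_i$ is continuous and strictly decreasing on $(0,\infty)$, and since $\|\Gamma(\lambda)\|\to0$ as $\lambda\to\infty$ (because $G_\lambda(x_i,x_j)\to0$) one has $\gamma_i(\lambda)\to0$. Consequently, for fixed $\beta$ the equation $\gamma_i(\lambda)\beta=1$ has at most one root $\lambda_i(\beta)>0$, which reproduces the bound of at most $N$ positive eigenvalues; their finite multiplicity will follow from the count below. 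To obtain the ordering $\lambda_0(\beta)\ge\lambda_1(\beta)\ge\cdots\ge\lambda_{N-1}(\beta)$ I would combine the pointwise inequality $\gamma_i\ge\gamma_{i+1}$ with strict monotonicity: evaluating at the root $\lambda_{i+1}(\beta)$ gives $\gamma_i(\lambda_{i+1}(\beta))\ge\gamma_{i+1}(\lambda_{i+1}(\beta))=1/\beta=\gamma_i(\lambda_i(\beta))$, hence $\lambda_{i+1}(\beta)\le\lambda_i(\beta)$.

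The strict gap at the top and the simplicity of $\lambda_0(\beta)$ both stem from the Perron--Frobenius theorem \cite[Thm.~8.2.11]{HJ:e} applied to the positive matrix $\Gamma(\lambda)$: its principal eigenvalue is simple and strictly dominates the others, so $\gamma_0(\lambda)>\gamma_1(\lambda)$ for every $\lambda>0$. Then $\gamma_0(\lambda_1(\beta))>\gamma_1(\lambda_1(\beta))=1/\beta$ forces $\lambda_1(\beta)<\lambda_0(\beta)$ strictly, and in particular $\gamma_i(\lambda_0(\beta))<1/\beta$ for all $i\ge1$. For the multiplicity I would return to the linear system~\eqref{E-linsys} in the proof of Lemma~\ref{L1}: the map $U\mapsto h=-\beta\sum_{i}U_iR_\lambda\delta_{x_i}$ carries $\ker(I-\beta\Gamma(\lambda))$ isomorphically onto the eigenspace of $\mathscr{H}_\beta$ at $\lambda$, so the multiplicity of $\lambda_0(\beta)$ equals that of $1/\beta=\gamma_0(\lambda_0(\beta))$ as an eigenvalue of $\Gamma(\lambda_0(\beta))$, namely $1$.

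It remains to produce $\beta_{c_1}>\beta_c$ below which only the index $i=0$ equation is solvable. As $\gamma_1$ is decreasing, the root of $\gamma_1(\lambda)\beta=1$ exists iff $1/\beta<\gamma_1(0^+):=\lim_{\lambda\downarrow0}\gamma_1(\lambda)$, so I would set $\beta_{c_1}=1/\gamma_1(0^+)$; the claim then reduces to showing $\gamma_1(0^+)<\infty$ together with $\gamma_0(0^+)>\gamma_1(0^+)$ (recall $\beta_c=1/\gamma_0(0^+)$, with $\beta_c=0$ when $G_0=\infty$). If $G_0<\infty$ this is immediate, since $\Gamma(\lambda)\to\Gamma(0)$ is finite and Perron--Frobenius yields $\gamma_0(0)>\gamma_1(0)$. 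The hard part is $G_0=\infty$, where every entry $G_\lambda(x_i,x_j)$ diverges as $\lambda\downarrow0$. I would isolate the divergence through the rank-one splitting
\[
\Gamma(\lambda)=G_\lambda(0,0)\,\mathbf{1}\mathbf{1}^{T}-C(\lambda),\qquad C_{ij}(\lambda)=\frac{1}{(2\pi)^d}\int_{[-\pi,\pi]^d}\frac{1-\cos(\theta,x_i-x_j)}{\lambda-\phi(\theta)}\,d\theta,
\]
and check, via the bound~\eqref{E-alphaest} (so that the integrand is $O(|\theta|^{2-\alpha})$ near $\theta=0$ and hence integrable), that $C(\lambda)$ converges to a finite matrix as $\lambda\downarrow0$. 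Since $\mathbf{1}\mathbf{1}^{T}$ has second-largest eigenvalue $0$, Weyl's inequality \cite[Thm.~4.3.1]{HJ:e} gives $\gamma_1(\lambda)\le 0+\lambda_{\max}(-C(\lambda))$, which stays bounded, so $\gamma_1(0^+)<\infty$; meanwhile $\gamma_0(\lambda)\ge N^{-1}\mathbf{1}^{T}\Gamma(\lambda)\mathbf{1}\ge G_\lambda(0,0)\to\infty$ shows $\gamma_0(0^+)=\infty$. This gives $\beta_{c_1}=1/\gamma_1(0^+)>0=\beta_c$ and closes the argument.
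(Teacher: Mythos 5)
Your proposal is correct and follows essentially the same route as the paper: the reduction to the eigenvalue curves $\gamma_i(\lambda)$ via Lemma~\ref{L1}, the root count of at most $N$ via the strict monotonicity of Lemma~\ref{L2}, the Perron--Frobenius gap for the top curve, and---in the key case $G_0=\infty$---your splitting $\Gamma(\lambda)=G_\lambda(0,0)\,\mathbf{1}\mathbf{1}^{T}-C(\lambda)$ combined with Weyl's inequality is precisely the paper's Lemma~\ref{L-eig-localization}, with $C(\lambda)=-\tilde{\Gamma}(\lambda)$ and the same $O(|\theta|^{2-\alpha})$ integrability estimate. The only (harmless) deviations are that you make explicit the isomorphism between $\ker(I-\beta\Gamma(\lambda))$ and the eigenspace of $\mathscr{H}_{\beta}$, which the paper leaves implicit when deducing simplicity of $\lambda_{0}(\beta)$ from that of $\gamma_{0}$, and that under \eqref{E:findisp} the bound \eqref{E-alphaest} must be replaced by the quadratic estimate $|\phi(\theta)|\ge C|\theta|^{2}$, exactly as the paper does.
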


\begin{corollary}
Under the conditions of Theorem~\ref{T-new-3} the multiplicity of each of the
eigenvalues $\lambda_{1}(\beta),\ldots,\lambda_{N-1}(\beta)$ does not exceed
$N-1$.
\end{corollary}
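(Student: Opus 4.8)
The plan is to obtain the Corollary as an immediate counting consequence of Theorem~\ref{T-new-3}, relying on two facts already in hand: that the \emph{total} multiplicity of the positive eigenvalues of $\mathscr{H}_{\beta}$, counted with multiplicity, is at most $N$, and that the principal eigenvalue $\lambda_{0}(\beta)$ is simple and strictly separated from the rest, $\lambda_{0}(\beta) > \lambda_{1}(\beta)$.

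First I would make the total-multiplicity bound explicit in the form needed. By Lemma~\ref{L1}, a number $\lambda > 0$ is an eigenvalue of $\mathscr{H}_{\beta}$ precisely when $\gamma_{i}(\lambda)\beta = 1$ for at least one index $i \in \{0, \ldots, N-1\}$, and its multiplicity as an eigenvalue of $\mathscr{H}_{\beta}$ equals the multiplicity of $1/\beta$ as an eigenvalue of the symmetric matrix $\Gamma(\lambda)$, i.e. the number of indices $i$ with $\gamma_{i}(\lambda)\beta = 1$. Since by Lemma~\ref{L2} each of the $N$ strictly decreasing functions $\gamma_{i}$ contributes at most one solution $\lambda$, summing over $i$ shows that the total multiplicity of all positive eigenvalues of $\mathscr{H}_{\beta}$ does not exceed $N$.

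Next I would invoke the simplicity and separation of $\lambda_{0}(\beta)$ recorded in Theorem~\ref{T-new-3}: it has unit multiplicity and satisfies $\lambda_{0}(\beta) > \lambda_{1}(\beta)$, hence it is distinct from every $\lambda_{i}(\beta)$ with $i \geq 1$ and contributes exactly $1$ to the total multiplicity bounded in the previous step. Subtracting this contribution, the combined multiplicity of $\lambda_{1}(\beta), \ldots, \lambda_{N-1}(\beta)$ is at most $N-1$; a fortiori, the multiplicity of any single one of them is at most $N-1$, which is the assertion of the Corollary.

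I do not anticipate a real obstacle. The only point requiring care is that the \emph{strict} inequality $\lambda_{0}(\beta) > \lambda_{1}(\beta)$ is what prevents the simple principal eigenvalue from coinciding with, and thereby being absorbed into the multiplicity of, the lower eigenvalues; this is what guarantees that the unit of multiplicity it carries is genuinely removed from the budget of $N$. The remainder is pure bookkeeping.
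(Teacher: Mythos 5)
Your proposal is correct and is essentially the argument the paper intends: the Corollary is stated without a separate proof precisely because it follows by the same counting you give, namely the total-multiplicity bound $N$ from Lemmas~\ref{L1} and~\ref{L2} (each strictly decreasing $\gamma_{i}(\lambda)$ contributes at most one solution of $\gamma_{i}(\lambda)\beta=1$) minus the unit of multiplicity carried by the simple principal eigenvalue, whose strict separation $\lambda_{0}(\beta)>\lambda_{1}(\beta)$ comes from the Perron--Frobenius step in the proof of Theorem~\ref{T-new-3}. Your only added care --- that simplicity of $\lambda_{0}(\beta)$ must be paired with the \emph{strict} inequality so its multiplicity is not absorbed into the lower eigenvalues --- matches exactly how the paper's inequality \eqref{E-eigen1} is used.
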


\begin{theorem}\label{T-new-2}
Every supercritical BRW is weakly supercritical as $\beta \downarrow
\beta_c$.
\end{theorem}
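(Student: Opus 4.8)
The plan is to read off Theorem~\ref{T-new-2} from the structural results already in hand, splitting into the cases $N\ge 2$ and $N=1$ and verifying in each the two ingredients of the definition of weak supercriticality: that for $\beta$ just above $\beta_c$ the operator $\mathscr{H}_\beta$ has a \emph{simple} positive eigenvalue, and that this eigenvalue tends to $0$ as $\beta\downarrow\beta_c$.

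For $N\ge 2$ I would invoke Theorem~\ref{T-new-3} directly. It supplies a threshold $\beta_{c_1}>\beta_c$ such that on $(\beta_c,\beta_{c_1})$ the operator $\mathscr{H}_\beta$ has a single positive eigenvalue, the principal one $\lambda_0(\beta)$, and that this eigenvalue has unit multiplicity, i.e. is simple. Taking $\varepsilon_0=\beta_{c_1}-\beta_c$ already secures the simplicity requirement on all of $(\beta_c,\beta_c+\varepsilon_0)$. For $N=1$, which lies outside the scope of Theorem~\ref{T-new-3}, I would argue from Lemmas~\ref{L1} and~\ref{L2}: here $\Gamma(\lambda)$ is the scalar $G_\lambda(x_1,x_1)=\gamma_0(\lambda)$, so any positive eigenvalue is automatically simple and, by the strict monotonicity in Lemma~\ref{L2}, is the unique root of $\gamma_0(\lambda)\beta=1$; its existence for every $\beta>\beta_c$ follows since $\gamma_0$ is continuous and strictly decreasing with $\gamma_0(0)=G_0$ and $\gamma_0(\lambda)\to 0$ as $\lambda\to\infty$.

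It then remains, in both cases, to show $\lambda_0(\beta)\to 0$ as $\beta\downarrow\beta_c$. The key identity is $\gamma_0(\lambda_0(\beta))=1/\beta$ from Lemma~\ref{L1}. By Lemma~\ref{L2} the principal eigenvalue $\gamma_0$ is continuous and strictly decreasing on $[0,\infty)$, hence has a continuous strictly decreasing inverse on its range. When $G_0<\infty$ the proof of Theorem~\ref{T-new-1} gives $\beta_c=1/\gamma_0(0)$, so $1/\beta\to\gamma_0(0)$ and $\lambda_0(\beta)=\gamma_0^{-1}(1/\beta)\to 0$. When $G_0=\infty$ one has $\beta_c=0$ and $\gamma_0(\lambda)\to\infty$ as $\lambda\to 0$, so $1/\beta\to\infty$ again forces $\lambda_0(\beta)\to 0$.

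The argument is thus essentially bookkeeping on top of Theorems~\ref{T-new-1} and~\ref{T-new-3}. The only point requiring genuine care is the limit $\lambda_0(\beta)\to 0$: one must treat the finite- and infinite-$G_0$ regimes separately and rely on the continuity (respectively divergence) of $\gamma_0$ at $\lambda=0$, not merely on its monotonicity, so that inverting the relation $\gamma_0(\lambda_0(\beta))=1/\beta$ is justified right up to the critical value $\beta_c$.
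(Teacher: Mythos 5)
Your proposal is correct and follows essentially the same route as the paper, which offers no separate proof of Theorem~\ref{T-new-2} but declares it a straightforward corollary of Theorem~\ref{T-new-3}: simplicity and uniqueness of $\lambda_{0}(\beta)$ on $(\beta_{c},\beta_{c_{1}})$ come from that theorem, while the limit $\lambda_{0}(\beta)\to 0$ as $\beta\downarrow\beta_{c}$ is exactly the paper's observation (inside the proof of Theorem~\ref{T-new-3}) that $\lambda_{0}$ solves $\gamma_{0}(\lambda_{0})=1/\beta$ with $\gamma_{0}$ strictly decreasing and $\lambda_{0}=0$ corresponding to $\beta=\beta_{c}$, in both the finite- and infinite-$G_{0}$ regimes. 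Your explicit handling of $N=1$ via the scalar $\Gamma(\lambda)=G_{\lambda}(0,0)$ is a small but welcome addition that the paper leaves implicit (Theorem~\ref{T-new-3} assumes $N\geq 2$); note only that continuity of $\gamma_{0}$ comes from Lemma~\ref{L-Gprop} and the Rellich theorem rather than from Lemma~\ref{L2}, whose strict monotonicity alone in fact already suffices for the inversion step.
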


Before start proving Theorem~\ref{T-new-3} we establish some further
properties of the matrix $\Gamma(\lambda)$, see definition \eqref{Def:G}.

\begin{lemma}\label{L-Gprop}
For all $\lambda>0$ the elements $\Gamma_{ij}(\lambda)$ of the matrix
$\Gamma(\lambda)$ are continuous, decreasing in $\lambda$ and satisfy
\begin{equation}\label{E-Gineq}
0<\Gamma_{ij}(\lambda)<\Gamma_{11}(\lambda)=\cdots=\Gamma_{NN}(\lambda),\quad i\neq j.
\end{equation}
\end{lemma}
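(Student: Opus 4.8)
The plan is to work throughout from the two equivalent representations of $\Gamma_{ij}(\lambda) = G_\lambda(x_i, x_j)$: the Laplace transform
\[
\Gamma_{ij}(\lambda) = \int_0^\infty e^{-\lambda t} p(t, x_i, x_j)\, dt
\]
and the Fourier representation \eqref{E-Green}. The positivity $\Gamma_{ij}(\lambda) > 0$ is already recorded in Lemma~\ref{L1} and follows from $p(t, x_i, x_j) > 0$, which holds for every $t > 0$ by irreducibility of the walk.

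For continuity and monotonicity in $\lambda$, I would differentiate the Laplace transform under the integral sign. Since $\phi(\theta) \le 0$ (because $a(0) < 0$, $a(z) \ge 0$ for $z \ne 0$ and $\sum_z a(z) = 0$ force $\phi(\theta) = \sum_z a(z)\cos(\theta, z) \le 0$), the denominator satisfies $\lambda - \phi(\theta) \ge \lambda > 0$, and the integrand $t e^{-\lambda t} p(t, x_i, x_j)$ is dominated uniformly on compact subintervals of $(0, \infty)$, which justifies differentiation. This yields
\[
\frac{d}{d\lambda}\Gamma_{ij}(\lambda) = -\int_0^\infty t\, e^{-\lambda t} p(t, x_i, x_j)\, dt < 0,
\]
so each $\Gamma_{ij}(\lambda)$ is strictly decreasing, while continuity on $(0,\infty)$ follows from the same domination.

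The equality of the diagonal entries is a consequence of spatial homogeneity: setting $x_i - x_i = 0$ in \eqref{E-Green} gives $\Gamma_{ii}(\lambda) = \frac{1}{(2\pi)^d}\int_{[-\pi,\pi]^d} (\lambda - \phi(\theta))^{-1}\, d\theta = G_\lambda(0,0)$, independent of $i$. For the strict inequality $\Gamma_{ij}(\lambda) < \Gamma_{ii}(\lambda)$ with $i \ne j$, I would use that $\phi$ is even to replace $e^{i(\theta, x_i - x_j)}$ by $\cos(\theta, x_i - x_j)$ and write
\[
\Gamma_{ii}(\lambda) - \Gamma_{ij}(\lambda) = \frac{1}{(2\pi)^d}\int_{[-\pi,\pi]^d} \frac{1 - \cos(\theta, x_i - x_j)}{\lambda - \phi(\theta)}\, d\theta.
\]
The integrand is nonnegative since $\lambda - \phi(\theta) > 0$, and the crux of the argument is the strict positivity of this integral: because $x_i \ne x_j$ forces $x_i - x_j \ne 0$, the set $\{\theta : \cos(\theta, x_i - x_j) = 1\}$ has Lebesgue measure zero, so the integrand is positive on a set of positive measure and the integral is strictly positive. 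The main obstacle is precisely this last point---excluding the possibility that an off-diagonal entry equals the diagonal one---which is settled by the measure-zero observation together with the strict positivity of the kernel $(\lambda - \phi(\theta))^{-1}$ for $\lambda > 0$.
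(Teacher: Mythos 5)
Your proof is correct and follows essentially the same route as the paper: the paper likewise reads off continuity, positivity and monotonicity directly from the representation \eqref{E-Green}, and proves the strict inequality \eqref{E-Gineq} by rewriting $\Gamma_{ij}(\lambda)$ with the kernel $\cos(\theta,x_j-x_i)$ and observing that the cosine is $<1$ on a subset of $[-\pi,\pi]^d$ of positive measure while $(\lambda-\phi(\theta))^{-1}>0$. Your additional details---differentiating the Laplace transform under the integral sign to get strict decrease, and invoking irreducibility for $p(t,x_i,x_j)>0$---simply make explicit what the paper treats as immediate.
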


\begin{proof}
The fact that the elements $\Gamma_{ij}(\lambda)=G_{\lambda}(x_{i},x_{j})$
are continuous, positive and decreasing in $\lambda$ immediates from
\eqref{E-Green}. To prove inequalities \eqref{E-Gineq} let us observe that by
\eqref{E-Green} we can write
\begin{equation}\label{E-Gij}
\Gamma_{ij}(\lambda)=G_\lambda(x_{i},x_{j}) =
\frac{1}{(2\pi)^d} \int_{[-\pi,\pi]^d} \frac{\cos(\theta, x_{j}-x_{i})}{\lambda -
\phi(\theta)}d\theta.
\end{equation}
Here, for $i\neq j$, we have $x_{j}\neq x_{i}$ and then $\cos(\theta,
x_{j}-x_{i})<1$ on a subset of $[-\pi,\pi]^d$ of positive measure, which
implies
\[
\Gamma_{ij}(\lambda)=G_\lambda(x_{i},x_{j}) <
\frac{1}{(2\pi)^d} \int_{[-\pi,\pi]^d} \frac{d\theta}{\lambda -
\phi(\theta)}= \Gamma_{11}(\lambda)=\cdots=\Gamma_{NN}(\lambda).
\]
The lemma is proved.
\end{proof}

The next lemma gives a bit more precise characterization of the localization
of the eigenvalues of the matrix $\Gamma(\lambda)$. Let us observe that by
Lemma~\ref{L2} each of the eigenvalues $\gamma_{i}(\lambda)$ of the matrix
$\Gamma(\lambda)$ is strictly decreasing when $\lambda\ge0$. Hence, for each
$i=0,1,\ldots,N-1$ there exists $\lim_{\lambda\to0}\gamma_{i}(\lambda)$,
finite or infinite, which will be denoted as $\gamma_{i}(0)$:
\[
\gamma_{i}(0):=\lim_{\lambda\to0}\gamma_{i}(\lambda),\quad i=0,1,\ldots,N-1.
\]

\begin{lemma}\label{L-eig-localization}
Let the  transition intensities $a(z)$ satisfy \eqref{E:findisp} or
\eqref{E:infindisp}. Then there exists $\gamma^{*}>0$,
$\gamma^{*}<\gamma_{0}$, such that
\begin{equation}\label{E-subperipheral}
0\le\gamma_{N-1}(0)\le\ldots\le \gamma_{1}(0)\le\gamma^{*}<\infty.
\end{equation}
\end{lemma}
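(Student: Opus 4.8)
The plan is to exploit the structure of $\Gamma(\lambda)$ uncovered in Lemma~\ref{L-Gprop}: all diagonal entries equal $G_{\lambda}(0,0)$, which diverges as $\lambda\to0$ exactly when $G_{0}=\infty$, whereas the \emph{differences} between diagonal and off-diagonal entries stay bounded. Concretely, I would write
\begin{equation*}
\Gamma(\lambda)=G_{\lambda}(0,0)\,J-D(\lambda),\qquad \mathbf{1}=(1,\ldots,1)^{T},\quad J=\mathbf{1}\mathbf{1}^{T},
\end{equation*}
where $D(\lambda)$ has entries $D_{ij}(\lambda)=G_{\lambda}(0,0)-\Gamma_{ij}(\lambda)$, so that $D_{ii}(\lambda)=0$ and, by \eqref{E-Gineq}, $D_{ij}(\lambda)>0$ for $i\neq j$. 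Using \eqref{E-Gij} these off-diagonal entries are
\begin{equation*}
D_{ij}(\lambda)=\frac{1}{(2\pi)^{d}}\int_{[-\pi,\pi]^{d}}\frac{1-\cos(\theta,x_{i}-x_{j})}{\lambda-\phi(\theta)}\,d\theta .
\end{equation*}
The first key step is to show that $D_{ij}(\lambda)\uparrow D_{ij}(0)<\infty$ as $\lambda\downarrow0$, \emph{even when} $G_{0}=\infty$: near the origin the numerator is $O(|\theta|^{2})$ while, by \eqref{E-alphaest}, $-\phi(\theta)\ge c_{1}|\theta|^{\alpha}$ (and $-\phi(\theta)\asymp|\theta|^{2}$ in the finite-variance case \eqref{E:findisp}), so the integrand is dominated by $|\theta|^{2-\alpha}$, which is integrable in every dimension since $\alpha<2$. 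Hence $D(\lambda)$ converges entrywise to a fixed finite matrix $D(0)$.

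The second and decisive step is a minimax bound that isolates the subprincipal eigenvalues from the divergent rank-one term $G_{\lambda}(0,0)J$. For any $v\perp\mathbf{1}$ one has $Jv=0$, whence $(\Gamma(\lambda)v,v)=-(D(\lambda)v,v)$. Applying the Courant--Fischer minimax principle \cite{HJ:e} with the $(N-1)$-dimensional test subspace $\mathbf{1}^{\perp}$ gives
\begin{equation*}
\gamma_{1}(\lambda)\le\max_{\substack{v\in\mathbf{1}^{\perp}\\ \|v\|=1}}(\Gamma(\lambda)v,v)=\max_{\substack{v\in\mathbf{1}^{\perp}\\ \|v\|=1}}\bigl(-(D(\lambda)v,v)\bigr)\le\|D(\lambda)\|.
\end{equation*}
Since $\gamma_{1}(\lambda)$ increases to $\gamma_{1}(0)$ as $\lambda\downarrow0$ (Lemma~\ref{L2}) and $\|D(\lambda)\|\to\|D(0)\|<\infty$, letting $\lambda\to0$ yields $\gamma_{1}(0)\le\|D(0)\|<\infty$. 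Because $\Gamma(\lambda)$ is positive-definite (Lemma~\ref{L2}), each $\gamma_{i}(\lambda)>0$, so $\gamma_{i}(0)\ge0$; combined with the ordering $\gamma_{N-1}(0)\le\cdots\le\gamma_{1}(0)$ this shows all subprincipal limits lie in $[0,\infty)$, which is precisely the nontrivial content when $G_{0}=\infty$.

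It remains to separate $\gamma_{1}(0)$ strictly from $\gamma_{0}:=\gamma_{0}(0)$. If $G_{0}=\infty$, then $\gamma_{0}(\lambda)\ge\Gamma_{11}(\lambda)=G_{\lambda}(0,0)\to\infty$, so $\gamma_{0}=\infty>\gamma_{1}(0)$ trivially. If $G_{0}<\infty$, then $\Gamma(\lambda)\to\Gamma(0)$, a symmetric matrix with strictly positive finite entries (Lemma~\ref{L1}); by the Perron--Frobenius theorem \cite[Thm.~8.2.11]{HJ:e} its largest eigenvalue $\gamma_{0}$ is simple, so again $\gamma_{1}(0)<\gamma_{0}$. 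In either case $0\le\gamma_{1}(0)<\gamma_{0}$ with $\gamma_{1}(0)<\infty$, and any $\gamma^{*}\in(\gamma_{1}(0),\gamma_{0})$ satisfies $0<\gamma^{*}<\infty$, $\gamma^{*}<\gamma_{0}$ and $\gamma_{1}(0)\le\gamma^{*}$, which establishes \eqref{E-subperipheral}. I expect the main obstacle to be the first step---verifying that the diagonal-minus-off-diagonal differences stay finite as $\lambda\to0$ in the infinite-variance regime---together with correctly orienting the minimax inequality so that the unbounded part $G_{\lambda}(0,0)J$ is annihilated on $\mathbf{1}^{\perp}$.
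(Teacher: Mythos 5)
Your proposal is correct and follows essentially the same route as the paper: you split off the divergent rank-one all-ones part, $\Gamma(\lambda)=G_{\lambda}(0,0)J-D(\lambda)$ (the paper's $G_{\lambda}\boldsymbol{1}+\tilde{\Gamma}(\lambda)$ up to sign), and bound the remainder as $\lambda\to0$ via the same estimate $1-\cos(\theta,x_i-x_j)=O(|\theta|^{2})$ against $|\phi(\theta)|\ge C|\theta|^{\alpha}$ from \eqref{E-alphaest}, with the integrand $|\theta|^{2-\alpha}$ integrable for all $\alpha\in(0,2]$. The only cosmetic differences are that you invoke Courant--Fischer on $\mathbf{1}^{\perp}$ directly where the paper applies Weyl's perturbation theorem (its standard corollary), and you treat the finite and infinite $G_{0}$ cases uniformly, deferring the case split to the strict separation $\gamma_{1}(0)<\gamma_{0}$, which you and the paper both settle by Perron--Frobenius when $G_{0}<\infty$.
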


\begin{proof}
By Lemma~\ref{L-Gprop} the function $\Gamma_{11}(\lambda)~
(\Gamma_{11}(\lambda)=\Gamma_{22}(\lambda)=\cdots=\Gamma_{NN}(\lambda))$ has
a limit $\lim_{\lambda\to0}\Gamma_{11}(\lambda)$. Our further proof will
depend on whether this limit is finite or infinite.

First, let
\[
\Gamma_{11}(0):=\lim_{\lambda\to0}\Gamma_{11}(\lambda)<\infty.
\]
By Lemma~\ref{L-Gprop} in this case each element $\Gamma_{ij}(\lambda)$ has a
finite positive limit as $\lambda\to0$, $\lambda>0$. This means that there
exists a positive matrix $\Gamma(0)$ such that
\[
\Gamma(\lambda)\to\Gamma(0)\quad\text{as}\quad \lambda\to0,~\lambda>0.
\]
Then by the Perron-Frobenius theorem~\cite[Thm.~8.2.11]{HJ:e}
\[
0\le\gamma_{N-1}(0)\le\ldots\le \gamma_{1}(0)<\gamma_{0}
\]
and we can take $\gamma^{*}=\gamma_{1}(0)$.

Now, let us consider the case
\[
\Gamma_{11}(0):=\lim_{\lambda\to0}\Gamma_{11}(\lambda)=\infty.
\]
Then also
\[
\Gamma_{ii}(0):=\lim_{\lambda\to0}\Gamma_{ii}(\lambda)=\infty,\quad i=1,2,\ldots,N.
\]
In this case we hardly can make use of Perron-Frobenius theorem because the
limiting matrix $\Gamma(0)$ is `infinite'. So, we will behave differently.

Denote by $\boldsymbol{1}$ the $N\times N$ matrix, all elements of which are
$1$'s. Then, by denoting
\[
G_{\lambda}:=\Gamma_{11}(\lambda)=\cdots=\Gamma_{NN}(\lambda)
\]
we can represent the matrix $\Gamma(\lambda)$, for $\lambda>0$, as follows
\begin{equation}\label{E-ggg}
\Gamma(\lambda)=G_{\lambda}\boldsymbol{1}+\tilde{\Gamma}(\lambda),
\end{equation}
where
\[
\tilde{\Gamma}(\lambda)=\left[\begin{array}{cccc}
0&\Gamma_{12}(\lambda)-G_{\lambda}   &\cdots&\Gamma_{1N}(\lambda)-G_{\lambda} \\
\Gamma_{21}(\lambda)-G_{\lambda} &0  &\cdots&\Gamma_{2N}(\lambda)-G_{\lambda} \\
\cdots &\cdots&\cdots\\
\Gamma_{N1}(\lambda)-G_{\lambda}  &\Gamma_{N2}(\lambda)-G_{\lambda}  &\cdots&0 \\
\end{array}\right].
\]
Direct calculations show that the symmetric matrix
$G_{\lambda}\boldsymbol{1}$ has the simple eigenvalue $(N-1)G_{\lambda}$ and
the eigenvalue $0$ of multiplicity $N-1$.

Now, consider the matrix $\tilde{\Gamma}(\lambda)$. For its elements, by
\eqref{E-Gij} we have
\[
\Gamma_{ij}(\lambda)-G_{\lambda}=
\frac{1}{(2\pi)^d} \int_{[-\pi,\pi]^d} \frac{\cos(\theta, x_{j}-x_{i})}{\lambda -
\phi(\theta)}d\theta-
\frac{1}{(2\pi)^d} \int_{[-\pi,\pi]^d} \frac{1}{\lambda -
\phi(\theta)}d\theta.
\]
Then
\begin{align*}
|\Gamma_{ij}(\lambda)-G_{\lambda}|&\le
\frac{1}{(2\pi)^d} \int_{[-\pi,\pi]^d} \frac{|\cos(\theta, x_{j}-x_{i})-1|}{\lambda -
\phi(\theta)}d\theta\\ &\le
\frac{2}{(2\pi)^d} \int_{[-\pi,\pi]^d} \frac{\sin^{2}\frac{(\theta, x_{j}-x_{i})}{2}}{\lambda -
\phi(\theta)}d\theta\le
\frac{|x_{j}-x_{i}|^{2}}{2(2\pi)^d} \int_{[-\pi,\pi]^d}\frac{|\theta|^{2}}{\lambda -
\phi(\theta)}d\theta.
\end{align*}
Therefore,
\begin{equation}\label{E-mainest}
\limsup_{\lambda\to0}|\Gamma_{ij}(\lambda)-G_{\lambda}|\le
\frac{|x_{j}-x_{i}|^{2}}{2(2\pi)^d} \int_{[-\pi,\pi]^d}\frac{|\theta|^{2}}{|\phi(\theta)|}d\theta.
\end{equation}

In the case \eqref{E:findisp}, as was shown in \cite{YarBRW:e}, the function
$\phi(\theta)$ satisfies the estimation $|\phi(\theta)|\ge C|\theta|^{2}$ for
$\theta\in[-\pi,\pi]^d$. In this case condition \eqref{E-mainest} takes the
form
\[
\limsup_{\lambda\to0}|\Gamma_{ij}(\lambda)-G_{\lambda}|\le
\frac{|x_{j}-x_{i}|^{2}}{2C(2\pi)^d} \int_{[-\pi,\pi]^d}\frac{|\theta|^{2}}{|\theta|^{2}}d\theta=
\frac{|x_{j}-x_{i}|^{2}}{2C}<\infty.
\]

In the case \eqref{E:infindisp} due to \eqref{E-alphaest} the function
$\phi(\theta)$ satisfies the estimation $|\phi(\theta)|\ge
C|\theta|^{\alpha}$ for $\theta\in[-\pi,\pi]^d$, where $\alpha\in (0,2)$. In
this case condition \eqref{E-mainest} takes the form
\[
\limsup_{\lambda\to0}|\Gamma_{ij}(\lambda)-G_{\lambda}|\le
\frac{|x_{j}-x_{i}|^{2}}{2C(2\pi)^d} \int_{[-\pi,\pi]^d}\frac{|\theta|^{2}}{|\theta|^{\alpha}}d\theta,
\]
where the integral in the right-hand part is converging for all $\alpha$
which satisfy $\alpha-2<d$, and hence for all $\alpha\in (0,2)$.

So, under conditions \eqref{E:findisp} and \eqref{E:infindisp} we have the
estimate
\begin{equation}\label{E:limest}
\limsup_{\lambda\to0}|\Gamma_{ij}(\lambda)-G_{\lambda}|\le C^{*},\quad i,j=1,2,\ldots,N,
\end{equation}
for all $\lambda>0$ sufficiently close to zero, where
\[
C^{*}=\max_{i,j}\frac{|x_{j}-x_{i}|^{2}}{2C(2\pi)^d} \int_{[-\pi,\pi]^d}\frac{|\theta|^{2}}{|\theta|^{\alpha}}d\theta,\quad \alpha\in(0,2].
\]
Hence, for such values of $\lambda>0$ the norm of the matrix
$\tilde{\Gamma}(\lambda)$ is uniformly bounded by some constant, and
consequently its spectral radius $\rho(\tilde{\Gamma}(\lambda))$ (i.e. the
maximal absolute value of its eigenvalues) is also uniformly bounded by some
constant $\gamma^{*}$, that is
\begin{equation}\label{E-gammastar}
\rho(\tilde{\Gamma}(\lambda))\le\gamma^{*}
\end{equation}
for all $\lambda>0$ sufficiently close to zero.

Now, observe that all matrices in \eqref{E-ggg} are symmetric and then by the
Weyl theorem \cite[Thm.~4.3.1]{HJ:e} the eigenvalues of the matrix
$\Gamma(\lambda)$ differ from the corresponding eigenvalues of the matrix
$G_{\lambda}\boldsymbol{1}$ by no more than $\gamma^{*}$, that is
\begin{align}\label{E-gl0}
|\gamma_{0}(\lambda)-(N-1)G_{\lambda}|&\le \gamma^{*},\\
\label{E-gl1}
0\le\gamma_{N-1}(\lambda)\le\ldots\le \gamma_{1}(\lambda)&\le\gamma^{*}.
\end{align}

Passing to the limit in \eqref{E-gl1}, we obtain the required estimate
$\gamma_{1}(0)\le\gamma^{*}$. The lemma is proved.
\end{proof}

\begin{remark}\rm
For the case \eqref{E:findisp} of finite variance of jumps the statement of
Lemma~\ref{L-eig-localization} was presented in \cite{AY15-MPMM:e,Y15-DAN:e}.
Here we give not only the proof for this case but also for the case
\eqref{E:infindisp} of infinite variance of jumps.
\end{remark}

\begin{remark}\rm
Sometimes it might be useful to know explicit estimates for the `spectral
gap' $\gamma^{*}$.

As was shown in the proof of Lemma~\ref{L-eig-localization}, in the case
$\lim_{\lambda\to0}\Gamma_{11}(\lambda)<\infty$ the quantity $\gamma^{*}$ can
be estimate as follows: $\gamma^{*}:=\gamma_{1}(0)$. Define
\[
M:=\max_{i,j} \Gamma_{ij}(0),\quad m:=\min_{i,j} \Gamma_{ij}(0),
\]
then by Lemma~\ref{L-Gprop} $m<M$, and the Hopf theorem
\cite[Sect.~8.2.12]{HJ:e} implies the following estimate for the so-called
`spectral gap':
\[
\gamma^{*}=\gamma_{1}(0)\le\frac{M-m}{M+m}\gamma_{0}(0)<\gamma_{0}(0).
\]

In the case $\lim_{\lambda\to0}\Gamma_{11}(\lambda)=\infty$, under conditions
\eqref{E:findisp} or \eqref{E:infindisp}, due to \eqref{E-gammastar} the
quantity $\gamma^{*}$ can be defined as
$\gamma^{*}:=\limsup_{\lambda\to0}\rho(\tilde{\Gamma}(\lambda))$. But since
for the elements of the matrix $\tilde{\Gamma}(\lambda)$ the limiting
estimate \eqref{E:limest} holds, then by \cite[Corollary~6.1.5]{HJ:e}
$\limsup_{\lambda\to0}\rho(\tilde{\Gamma}(\lambda))\le (N-1)C^{*}$ and thus
$\gamma^{*}\le (N-1)C^{*}$.
\end{remark}

\begin{proof}[Proof of Theorem~\ref{T-new-3}]
By Lemma~\ref{L1} the eigenvalues of the operator $\mathscr{H}_\beta$ satisfy
the equations \eqref{E:glbeq1}. The quantities $\gamma_{i}(\lambda)$ are the
eigenvalues of the positive and symmetric matrix $\Gamma(\lambda)$, and (as
is shown in \cite{MolYar12-2:e} and also follows from Lemma~\ref{L2}) this
matrix is the Gramian matrix for some appropriate scalar product and hence is
positive-definite. In this case by Lemma~\ref{L-Gprop} all eigenvalues
$\gamma_{i}(\lambda)$ are real and positive (and can be arranged in ascending
order):
\begin{equation}\label{E-eigen}
0\le\gamma_{N-1}(\lambda)\le\ldots\le \gamma_{1}(\lambda)\le \gamma_{0}(\lambda).
\end{equation}
From \eqref{E-Green} it follows that elements of the matrix $\Gamma(\lambda)$
tend to zero as $\lambda \to \infty$ and so $\gamma_i(\lambda) \to 0$ as
$\lambda \to \infty$ for all $i=0,1,\ldots,N-1$. By Rellich theorem
\cite[Ch.~2, Thm.~6.8]{Kato:e} all functions $\gamma_i(\lambda)$ are
piecewise smooth when $\lambda\ge0$.

By Lemma~\ref{L-Gprop} the elements of the matrix $\Gamma(\lambda)$ are
strictly positive when $\lambda>0$ and then by Perron-Frobenius
theorem~\cite[Thm.~8.2.11]{HJ:e} the principal eigenvalue
$\gamma_{0}(\lambda)$ of the matrix $\Gamma(\lambda)$ has the unit
multiplicity and strictly exceeds other eigenvalues, i.e. the last of the
inequalities \eqref{E-eigen} is strict:
\begin{equation}\label{E-eigen1}
0\le\gamma_{N-1}(\lambda)\le\ldots\le \gamma_{1}(\lambda)< \gamma_{0}(\lambda).
\end{equation}

Again by Lemma~\ref{L-Gprop} the matrix $\Gamma(\lambda)$ is  continuous for
all values $\lambda>0$. Behaviour of this matrix can differ as $\lambda\to 0$
and further proof of the theorem depends on this behaviour.

As we noted earlier, the quantity
$\Gamma_{11}(\lambda)=\cdots=\Gamma_{NN}(\lambda)=G_{\lambda}(0,0)$ has a
limit as $\lambda\to0$, finite or infinite. If the quantity
$G_{\lambda}(0,0)$ tends to some finite limit as $\lambda\to 0$ then by
Lemma~\ref{L-eig-localization} the eigenvalues $\gamma_i(\lambda)$ behave as
is shown in Fig.~\ref{F-mulambda}, where
$\gamma_{1}(0)=\frac{1}{\beta_{c_{1}}}\le \gamma^{*}$. In the case when the
transition intensities $a(z)$ satisfy \eqref{E:findisp} this is true for
$d\ge 3$. In the case when the transition intensities $a(z)$ satisfy
\eqref{E:infindisp} this is true for $d=1$ and $\alpha\in(0,1)$, or for $d\ge
2$ and $\alpha\in(0,2)$.

\begin{figure}[!htbp]
\hfill\subfigure[Case $\lim\limits_{\lambda\to\infty}\gamma_{0}(\lambda)<\infty$.]
{\includegraphics[width=0.4\textwidth]{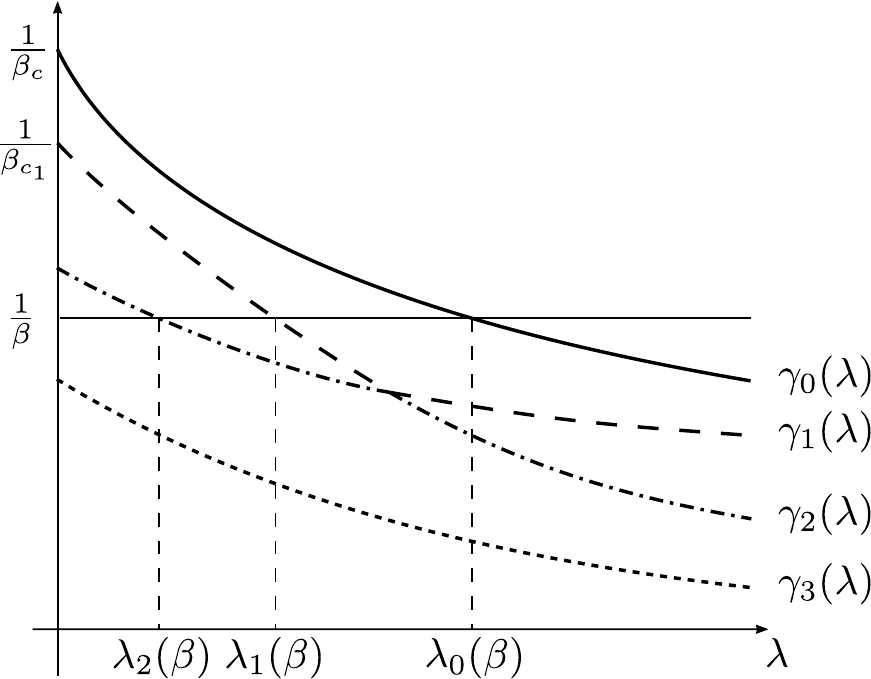}}
\hfill\subfigure[Case
$\lim\limits_{\lambda\to\infty}\gamma_{0}(\lambda) =\infty$.]
{\includegraphics[width=0.4\textwidth]{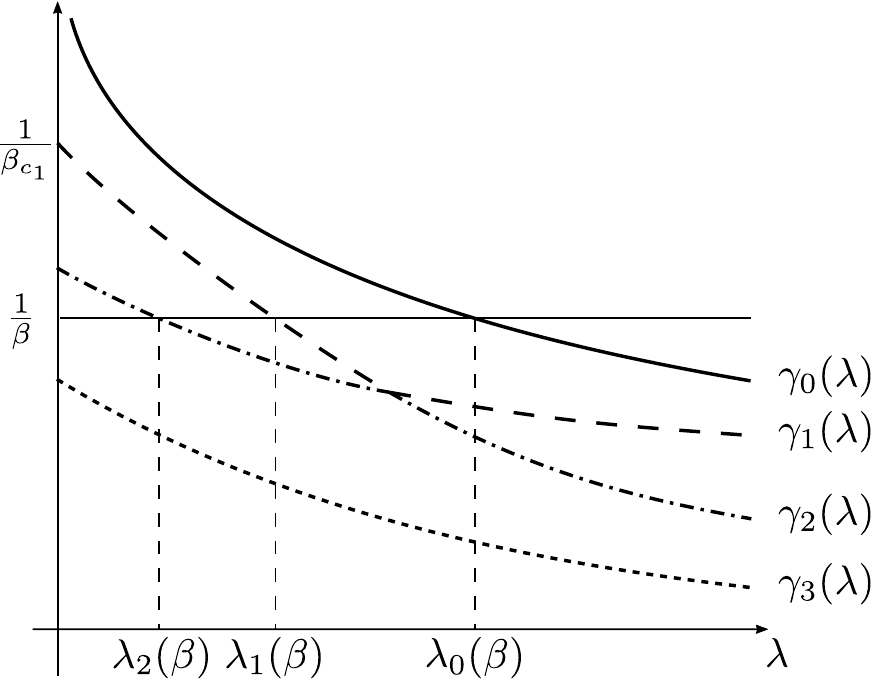}}
\hfill\mbox{}
\caption{Plots of the functions $\gamma_{i}(\lambda)$.}\label{F-mulambda}
\end{figure}

Another possible case is when $G_{\lambda}(0,0)\to\infty$ as $\lambda\to 0$.
For positive matrices, the leading eigenvalue always exceeds any diagonal
element of the matrix. Therefore, in this case
$\gamma_{0}(\lambda)\ge\Gamma_{11}(\lambda)=G_{\lambda}(0,0)$, from which
$\gamma_{0}(\lambda) \to\infty$ as $\lambda\to 0$. Then $\beta_{c}=0$. If the
transition intensities $a(z)$ satisfy \eqref{E:findisp} this is true for
$d=1$ or $d=2$. If the transition intensities $a(z)$ satisfy
\eqref{E:infindisp} this is true for $d=1$ and $\alpha\in[1,2)$. By
Lemma~\ref{L-eig-localization} here again
$\beta_{c_{1}}\ge\frac{1}{\gamma^{*}}>0$. This situation is illustrated in
Fig.~\ref{F-mulambda}(b).


By Lemma~\ref{L1} the eigenvalues $\lambda_i$ of the operator
$\mathscr{H}_\beta$ are solutions of the equations \eqref{E:glbeq1}, see
Fig.~\ref{F-mulambda}. However, since by Lemma~\ref{L2} every function
$\gamma_{i}(\lambda)$ is strictly decreasing when $\lambda\ge0$, then the
total number of the solutions $\lambda_{i}(\beta)$ of these equations does
not exceed $N$.

From inequalities \eqref{E-eigen1} it follows that if the operator
$\mathscr{H}_\beta$ has positive eigenvalues for some fixed $\beta$, then the
maximal of them is $\lambda_{0}=\lambda_{0}(\beta)$ which is a solution of
the equation
\begin{equation}\label{E-lambda0}
\gamma_0(\lambda_{0}) = \frac{1}{\beta},
\end{equation}
is simple and strictly exceeds others. The minimum value of the solution
$\lambda_0$ of equation~\eqref{E-lambda0} is $\lambda_0 = 0$. The
corresponding value $\beta_c$ of the parameter $\beta$ is critical. Since the
function $\gamma_{0}(\lambda)$ is strictly decreasing then the eigenvalue
$\lambda_{0}=\lambda_{0}(\beta)$ increases with the increase of parameter
$\beta$. The theorem is proved.
\end{proof}

\section{Multiple Eigenvalues: an Example}\label{S-example}

By Theorem~\ref{T-new-3} the principal eigenvalue $\lambda_{0}(\beta)$ of the
operator $\mathscr{H}_{\beta}$ is always simple. In this section we
demonstrate that some other eigenvalues $\lambda_{1}(\beta), \ldots,
\lambda_{N-1}(\beta)$ of this operator may actually coincide (i.e. their
multiplicity may be greater than one) and this situation is possible even in
the case of arbitrary finite number of sources (of equal intensity). As is
shown in the following Example~\ref{Ex-new-4} this situation may occur if
there is a certain `symmetry' of the spatial configuration of the sources
$x_{1}, x_{2}, \ldots, x_{N}$.

In Example~\ref{Ex-new-4} we assume that the function of transition
probabilities is symmetric in the following sense: its values do not change
at any permutation of arguments. In particular, a function of a vector
variable $z$ is symmetric if its values are the same at any permutation of
coordinates of vector $z$.

Let us present a statement to be used further for `integrable' models, where
equations for estimating $\lambda_{i}$ from Theorem~\ref{T-new-3} can be
found explicitly.

\begin{lemma}\label{L-eqviv-G}
If the function of transition probabilities $a(z)$ is symmetric in the
following sense: its values do not change at any permutation of arguments,
then the function $G_{\lambda}(z)$ is also symmetric in the same sence.
\end{lemma}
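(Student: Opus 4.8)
The plan is to work directly with the Fourier representation \eqref{E-Green} of Green's function, exploiting the fact that any permutation of the coordinates acts on $\mathbb{R}^{d}$ as an orthogonal transformation which leaves the cube $[-\pi,\pi]^{d}$ invariant. Using spatial homogeneity to write $G_{\lambda}(z):=G_{\lambda}(0,z)$, the goal reduces to showing $G_{\lambda}(Pz)=G_{\lambda}(z)$ for every permutation matrix $P$ acting on the coordinates of $z$. The symmetry hypothesis on $a$ is precisely $a(Pz)=a(z)$ for all such $P$ and all $z\in\mathbb{Z}^{d}$.

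First I would verify that the symbol $\phi(\theta)=\sum_{z\in\mathbb{Z}^{d}}a(z)e^{i(\theta,z)}$ inherits this invariance. Since $P$ is orthogonal, $P^{T}=P^{-1}$, so $(P\theta,z)=(\theta,P^{-1}z)$; hence $\phi(P\theta)=\sum_{z}a(z)e^{i(\theta,P^{-1}z)}$. Reindexing the (absolutely convergent) sum by $w=P^{-1}z$ and using $a(Pw)=a(w)$ gives $\phi(P\theta)=\sum_{w}a(w)e^{i(\theta,w)}=\phi(\theta)$, so $\phi$ is permutation-invariant as well.

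With this in hand, the final step is a change of variables $\theta=P\eta$ in the integral \eqref{E-Green} defining $G_{\lambda}(Pz)$. Three elementary facts combine: the Jacobian of a coordinate permutation has absolute value one and $[-\pi,\pi]^{d}$ is invariant under it, so the domain and measure are unchanged; $(P\eta,Pz)=(\eta,z)$ by orthogonality; and $\phi(P\eta)=\phi(\eta)$ from the previous step. These turn the integrand for $G_{\lambda}(Pz)$ into exactly the integrand for $G_{\lambda}(z)$, yielding $G_{\lambda}(Pz)=G_{\lambda}(z)$, which is the asserted symmetry. (Alternatively, one could argue at the level of the transition probabilities: the generator $\mathscr{A}$ commutes with the coordinate-permutation operators when $a$ is symmetric, so $p(t,0,Pz)=p(t,0,z)$, and the symmetry of $G_{\lambda}$ follows on taking the Laplace transform; I would keep the Fourier argument as primary since \eqref{E-Green} is already available.)

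I do not anticipate a genuine obstacle here. The only points demanding care are the bookkeeping of the permutation action (tracking $P$ versus $P^{-1}$ consistently through both the reindexing of the sum and the substitution in the integral) and confirming the invariances of the cube and of the Euclidean inner product under $P$; none of these is deep, and convergence is unproblematic because $\lambda-\phi(\theta)\ge\lambda>0$ keeps the integrand well behaved.
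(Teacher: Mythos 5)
Your proposal is correct and follows essentially the same route as the paper's proof: first establish the permutation invariance of the symbol $\phi(\theta)$ by reindexing the sum, then transfer it to $G_{\lambda}$ via a change of variables in the Fourier integral \eqref{E-Green}, using that a permutation matrix is orthogonal (the paper writes $\mathbf{R}^{*}$ where you write $P^{-1}=P^{T}$, which is the same thing). Your explicit remarks on the unit Jacobian and the invariance of the cube $[-\pi,\pi]^{d}$ merely spell out steps the paper leaves implicit.
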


\begin{proof}
Let us recall that Green's function $G_\lambda(z)$ can be represented
\cite{YarBRW:e} in the form \eqref{E-Green}, where $\phi(\theta)$ is the
Fourier transform of the function of transition probabilities $a(z)$ and is
defined by $\phi(\theta) = \sum_{z\in\mathbb Z^d} a(z) e^{i(\theta,z)}$,
$\theta\in [-\pi,\pi]^d$. To prove the lemma it suffices to demonstrate that
$G_{\lambda}(z)=G_{\lambda}(\mathbf{R}z)$ for every permutation matrix
$\mathbf{R}$ (i.e. a matrix whose rows and columns have the only one nonzero
element, which is equal to one). So,
\begin{align*}
\phi(\mathbf{R}\theta) &= \sum_{z\in\mathbb Z^d} a(z) e^{i(\mathbf{R}\theta,z)} =
\sum_{z\in\mathbb Z^d} a(z) e^{i(\theta, \mathbf{R}^{*}z)}\\&=
\sum_{z'\in\mathbb Z^d} a((\mathbf{R}^{*})^{-1}z') e^{i(\theta, z')}  =
\sum_{z'\in\mathbb Z^d} a(z') e^{i(\theta,z')} =
\phi(\theta),
\end{align*}
where the equality $a((\mathbf{R}^{*})^{-1}z')=a(z')$ holds for all
$z'\in\mathbb Z^d$ since the function $a(z)$ is symmetric. Hence the function
$\phi(\theta)$ is also symmetric. Further,
\begin{align*}
G_\lambda(\mathbf{R}z) &=
\frac{1}{(2\pi)^d} \int_{[-\pi,\pi]^d} \frac{e^{i(\theta,\mathbf{R}z)}}{\lambda-\phi(\theta)} d\theta =
\frac{1}{(2\pi)^d} \int_{[-\pi,\pi]^d} \frac{e^{i(\mathbf{R}^{*}\theta,z)}}{\lambda-\phi(\theta)} d\theta \\&=
\frac{1}{(2\pi)^d} \int_{[-\pi,\pi]^d} \frac{e^{i(\theta,z)}}{\lambda-\phi((\mathbf{R}^{*})^{-1}\theta)} d\theta =
G_\lambda(z)
\end{align*}
for every permutation matrix $\mathbf{R}$, where the equality
$\phi(\theta)=\phi((\mathbf{R}^{*})^{-1}\theta)$ again holds for all $\theta
\in[-\pi,\pi]^d$ since the function $\phi(\theta)$ is symmetric. Thus, the
function $G_{\lambda}(z)$ is also symmetric.
\end{proof}

Now, we are ready to present an example.

\begin{example}\label{Ex-new-4}
Let the function of transition probabilities $a(z)$ be symmetric in the
following sense: its values do not change at any permutation of arguments,
and let $x_{1},\ldots,x_{N}$, where $N\geq 2$, be the vertices of a regular
simplex (i.e. of a simplex for which the lengths of the edges are equal). For
example, we can take
\begin{equation*}
x_{1}=\{1,0,\ldots,0\},~
x_{2}=\{0,1,\ldots0\},~\ldots,~
x_{N}=\{0,0,\ldots,1\}.
\end{equation*}

By Lemma~\ref{L1} the existence of a non-trivial solution $\lambda$ of the
equation~\eqref{E:glbeq1} for some $\beta$ is equivalent to the resolvability
of the equation
 \begin{equation}\label{E-det}
\det\left(\Gamma(\lambda)-\tfrac{1}{\beta}I\right)=0,
\end{equation}
where the matrix $\Gamma(\lambda)$ with the elements
$\Gamma_{ij}(\lambda)=G_{\lambda}(x_{i},x_{j})$ is defined by
equation~\eqref{Def:G}.

Since the random walk by assumption is symmetric and homogeneous then
\[
G_{\lambda}(x_{i},x_{j})=G_{\lambda}(0,x_{i}-x_{j})=G_{\lambda}(0,x_{j}-x_{i})=G_{\lambda}(x_{j}-x_{i}).
\]
From the definition of the function $G_{\lambda}(u,v)\equiv G_{\lambda}(u-v)$
by Lemma~\ref{L-eqviv-G} it follows that all the values
$G_{\lambda}(x_{j}-x_{i})$ coincide with each other when $i\neq j$, and hence
they coincide with $G_{\lambda}(x_{1}-x_{2})=G_{\lambda}(z_{*})$ (for
simplicity we denote $z_{*}=x_{1}-x_{2}$). So,
\begin{equation}
\label{E-defz}
G_{\lambda}(x_{i},x_{j})=G_{\lambda}(x_{j}-x_{i})\equiv G_{\lambda}(x_{1}-x_{2})=G_{\lambda}(z_{*})\quad\text{for all}\quad
i\neq j,
\end{equation}
while for $i=j$ we have $G_{\lambda}(x_{i},x_{i})\equiv
G_{\lambda}(x_{i}-x_{i})=G_{\lambda}(0)=G_{\lambda}$. Thus, we can represent
equation~\eqref{E-det} as
\begin{equation*}
\det\left[\begin{array}{ccc}
G_{\lambda}-\frac{1}{\beta}   &\cdots&G_{\lambda}(z_{*})   \\
G_{\lambda}(z_{*})  &\cdots&G_{\lambda}(z_{*})  \\
\cdots &\cdots&\cdots\\
G_{\lambda}(z_{*})  &\cdots  &G_{\lambda}-\frac{1}{\beta}\\
\end{array}\right]=0.
\end{equation*}
Using a bit cumbersome but standard linear transforms we rewrite the last
determinantal equation as
\begin{equation*}
\left(G_{\lambda}-G_{\lambda}(z_{*})-\frac{1}{\beta}\right)^{N-1}\det\left[\begin{array}{ccc}
G_{\lambda}-\frac{1}{\beta}+(N-1)G_{\lambda}(z_{*})   &\cdots&G_{\lambda}(z_{*})   \\
0  &\cdots& 0\\
\cdots &\cdots&\cdots\\
0  &\cdots  &-1\\
\end{array}\right]=0,
\end{equation*}
which is equivalent to
\begin{equation*}
\left(G_{\lambda}+(N-1)G_{\lambda}(z_{*})-\frac{1}{\beta}\right)
\left(G_{\lambda}-G_{\lambda}(z_{*})-\frac{1}{\beta}\right)^{N-1}=0.
\end{equation*}
From this last equation, it is seen that the operator $\mathscr{H}_{\beta}$
has a simple leading eigenvalue $\lambda=\lambda_{0}(\beta)$ satisfying the
equation
\[
G_{\lambda}+(N-1)G_{\lambda}(z_{*})-\frac{1}{\beta}=0,
\]
and an eigenvalue $\lambda=\lambda_{1}(\beta)=\cdots= \lambda_{N-1}(\beta)$
of multiplicity $N-1$ satisfying the equation
\[
G_{\lambda}-G_{\lambda}(z_{*})-\frac{1}{\beta}=0.
\]
Hence in this case the quantities $\beta_{c}$ and $\beta_{c_{1}}$ can be
calculated explicitly:
\begin{equation}
\label{E-beta}
\beta_{c}=\frac{1}{G_{0}+(N-1)G_{0}(z_{*})}, \quad
\beta_{c_{1}}=\frac{1}{G_{0}-G_{0}(z_{*})}.
\end{equation}
\end{example}

\begin{remark}\label{R-new-1}\rm
Under the conditions of Example~\ref{Ex-new-4} according to \eqref{E-defz}
and \eqref{E-beta} the quantity $\beta_{c_{1}}$ depends on the norm $|z_{*}|$
of the vector $z_{*}$ (i.e. on the distance between the sources) and does not
depend on the number $N$ of the sources, that is
$\beta_{c_{1}}=\beta_{c}(|z_{*}|)>0$. At the same time the quantity
$\beta_{c}$ depends not only on the distance between the sources but also on
the number $N$ of the sources, that is $\beta_{c}=\beta_{c}(|z_{*}|,N)$, and
in such a way that $\beta_{c}(|z_{*}|,N)\to 0$ as $N\to\infty$ when $z_{*}$
is fixed. Moreover, $\beta_{c}(|z_{*}|,N)\equiv0$ when $G_{0}=\infty$.
\end{remark}

\begin{remark}For the case when $\mathscr{A} = \kappa\Delta$, $\kappa > 0$, is the
lattice Laplacian, Example~\ref{Ex-new-4} was presented in \cite{Y15-DAN:e},
and under assumption of finite variance of jumps it was given in
\cite{AY15-MPMM:e}.
\end{remark}

\section*{Acknowledgments}

This study has been carried out at Lomonosov Moscow State University and at
Steklov Mathematical Institute of Russian Academy of Sciences. The work was
supported by the Russian Science Foundation, project no.~14-21-00162.


 \providecommand{\bbljan}[0]{January}
 \providecommand{\bblfeb}[0]{February}
 \providecommand{\bblmar}[0]{March}
 \providecommand{\bblapr}[0]{April}
 \providecommand{\bblmay}[0]{May}
 \providecommand{\bbljun}[0]{June}
 \providecommand{\bbljul}[0]{July}
 \providecommand{\bblaug}[0]{August}
 \providecommand{\bblsep}[0]{September}
 \providecommand{\bbloct}[0]{October}
 \providecommand{\bblnov}[0]{November}
 \providecommand{\bbldec}[0]{December}
 \providecommand{\cprime}[0]{$'$}

\end{document}